\documentclass{article}
\usepackage{amssymb,amsthm,amsmath,enumerate}
\usepackage{graphics}
\usepackage{mathabx}

\tolerance=2500

\theoremstyle{plain}

\newtheorem{theorem}{Theorem}
\newtheorem{lemma}{Lemma}
\newtheorem{corollary}{Corollary}
\newcommand*{\supp}{\ensuremath{\mathrm{supp\,}}}
\newcommand*{\Ann}{\ensuremath{\mathrm{Ann\,}}}
\newcommand*{\Aut}{\ensuremath{\mathrm{Aut\,}}}
\newcommand*{\Ker}{\ensuremath{\mathrm{Ker\,}}}
\newcommand*{\R}{\ensuremath{\mathbb{R}}}

\newcommand*{\C}{\ensuremath{\mathbb{C}}}
\newcommand*{\N}{\ensuremath{\mathbb{N}}}

\begin{document}

\date{}

\author{{L\'aszl\'o Sz\'ekelyhidi}\\
   {\small\it Institute of Mathematics, University of Debrecen,}\\
   {\small\rm e-mail: \tt lszekelyhidi@gmail.com} }

\title{Spherical spectral synthesis
   \footnotetext{The research was partly supported by the
   Hungarian National Foundation for Scientific Research (OTKA),
   Grant No. K111651.}\footnotetext{Keywords and phrases:
  Gelfand pair, spherical function, spherical monomial, spectral synthesis}\footnotetext{AMS (2000)
   Subject Classification: 43A45, 43A90, 22D15}}

\maketitle

\begin{abstract}
In this paper we make an attempt to extend L.~Schwartz's classical result on spectral synthesis to several dimensions. Due to counterexamples of D.~I.~Gurevich this is impossible for translation invariant varieties. Our idea is to replace translations by proper euclidean motions in higher dimensions. For this purpose we introduce the basic concepts of spectral analysis and synthesis in the non-commutative setting based on Gelfand pairs, where "translation invariance" will be replaced by invariance with respect to a compact group of automorphisms. The role of exponential functions will be played by spherical functions. As an application we obtain the extension of L. Schwartz's fundamental result.
\end{abstract}

\section{Notation and terminology}
\hskip.5cm
In this paper we propose a general setting for spectral analysis and synthesis on non-commutative locally compact groups. It is clear that exponentials and exponential monomials, which serve as basic building blocks for spectral synthesis on commutative groups will not be able to play a similar role on non-commutative groups. Nevertheless, some commutative ideas can be utilized in the case of Gelfand pairs. In our approach exponentials will be replaced by spherical functions and translation invariance will be replaced by invariance with respect to a given compact subgroup, in particular, to a compact group of automorphisms. We apply these ideas in the case of the Gelfand pair $(\R^n, SO(n))$, where the classical concept of "translation invariance" in spectral synthesis will be replaced by invariance under Euclidean motions. It turns out that this "spherical spectral synthesis" can be considered as a generalization of L.~Schwartz's classical spectral synthesis result from the reals to $\R^n$.
\vskip.1cm

Although the basics of the theory of spherical functions and Gelfand pairs can be found in  \cite{MR0621691} (see also \cite{MR2640609}) here we include all necessary concepts and results -- for the sake of completeness.
\vskip.1cm

In this paper $\C$ denotes  the set of complex numbers. For a locally compact group $G$ we denote by $\mathcal C(G)$ the locally
convex topological vector space of all continuous complex valued functions defined on $G$, equipped with the pointwise
operations and with the topology of uniform convergence on compact sets. For each function $f$ in $\mathcal C(G)$ we define $\widecheck{f}$ by $\widecheck{f}(x)=f(x^{-1})$, whenever $x$ is in $G$. 
\vskip.1cm

It is known that the dual of $\mathcal C(G)$ can be identified with the space $\mathcal M_c(G)$ of all compactly supported complex Borel measures on $G$ which is equipped with the pointwise operations and with the weak*-topology. 
The pairing between $\mathcal C(G)$ and $\mathcal M_c(G)$ is given by the formula
$$
\langle \mu,f\rangle=\int f\,d\mu\,.
$$

We shall use the following theorem, describing the dual of $\mathcal M_c(G)$. A more general theorem together with the proof can be found in \cite{KeN76}, 17.6, p.~155. 

\begin{theorem}\label{stardual}
Let $G$ be a locally compact group. For every weak*-continuous linear functional $F:\mathcal M_c(G)\to\C$ there exists a unique continuous function $f:G\to \C$ such that $F(\mu)=\mu(f)$ for each $\mu$ in $\mathcal M_c(G)$.
\end{theorem}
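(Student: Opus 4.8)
The plan is to recognize this statement as the special case, for the dual pair $\langle\mathcal C(G),\mathcal M_c(G)\rangle$, of the general principle that the dual of a locally convex space carrying its weak*-topology is the original space. Indeed, by the identification recalled above, $\mathcal M_c(G)$ is precisely the topological dual of $\mathcal C(G)$ equipped with the weak*-topology $\sigma(\mathcal M_c(G),\mathcal C(G))$, so the functions $f$ whose existence is asserted should be exactly the evaluation functionals coming from elements of $\mathcal C(G)$. I would therefore split the argument into an existence part (every weak*-continuous $F$ arises from some $f\in\mathcal C(G)$) and a uniqueness part.

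For existence, I would first exploit weak*-continuity of $F$ at $0$: there is a basic weak*-neighborhood of $0$ on which $|F|<1$, and such a neighborhood has the finite form $\{\mu\in\mathcal M_c(G):|\mu(f_i)|<\varepsilon,\ i=1,\dots,n\}$ for suitable $f_1,\dots,f_n\in\mathcal C(G)$ and some $\varepsilon>0$. A homogeneity argument then shows that $F$ vanishes on the common kernel of the evaluation functionals $\mu\mapsto\mu(f_i)$: if $\mu(f_i)=0$ for all $i$, then $t\mu$ lies in the neighborhood for every scalar $t$, whence $|t|\,|F(\mu)|<1$ for all $t$ and so $F(\mu)=0$. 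The crux is then the elementary linear-algebra fact that a linear functional annihilating the intersection of the kernels of finitely many linear functionals is a linear combination of them; applying it produces scalars $c_1,\dots,c_n$ with $F(\mu)=\sum_{i=1}^n c_i\,\mu(f_i)=\mu\bigl(\sum_{i=1}^n c_i f_i\bigr)$ for every $\mu$. Setting $f=\sum_{i=1}^n c_i f_i$, which is again in $\mathcal C(G)$ as a finite linear combination of continuous functions, gives $F(\mu)=\mu(f)$, as required.

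For uniqueness I would use that the point masses $\delta_x$ belong to $\mathcal M_c(G)$ and satisfy $\delta_x(f)=f(x)$: if $f,g\in\mathcal C(G)$ induce the same functional, i.e. $\mu(f)=\mu(g)$ for all $\mu\in\mathcal M_c(G)$, then taking $\mu=\delta_x$ yields $f(x)=g(x)$ for every $x\in G$, so $f=g$. The only genuine obstacle lies in the existence part, specifically in passing from weak*-continuity to the finite-dimensional condition displayed above; once the neighborhood is written in that finite form the remainder is routine. Since this is exactly the content of the more general theorem in \cite{KeN76}, 17.6, I would in practice simply invoke that result, checking only that the pairing $\langle\mu,f\rangle=\int f\,d\mu$ used here agrees with the evaluation $\mu(f)$ appearing there.
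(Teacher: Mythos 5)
Your argument is correct and is in fact the standard proof of the general duality principle that the paper itself merely cites: the paper offers no proof of this theorem beyond the reference to \cite{KeN76}, 17.6. Your existence argument (bounding $F$ on a basic weak*-neighborhood determined by $f_1,\dots,f_n$, using homogeneity to show $F$ vanishes on $\bigcap_i\ker(\mu\mapsto\mu(f_i))$, and invoking the finite-codimension lemma to write $F(\mu)=\mu\bigl(\sum_i c_i f_i\bigr)$) together with the uniqueness argument via the point masses $\delta_x$ is complete and correct, so either writing it out as you do or simply invoking the cited result is fully consistent with the paper.
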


The measure $\widecheck{\mu}$ is defined by 
$$
\langle\widecheck{\mu},f\rangle=\langle\mu,\widecheck{f}\rangle
$$
for each $\mu$ in $\mathcal M_c(G)$ and $f$ in $\mathcal C(G)$.
\vskip.1cm

{\it Convolution} in $\mathcal M_c(G)$ is defined by
\[
\langle \mu*\nu,f\rangle=\int f(x y)\,d\mu(x)\,d\nu(y)
\]
for each $\mu,\nu$ in $\mathcal M_c(G)$ and $x$ in $G$. Convolution converts the linear space $\mathcal M_c(G)$ into a topological algebra with unit $\delta_e$, $e$ being the identity in $G$.  
\vskip.1cm

Convolution of measures in $\mathcal M_c(G)$ with arbitrary functions in $\mathcal C(G)$ is defined  by the similar formula
$$
\mu*f(x)=\int f(y^{-1} x)\,d\mu(y)
$$
for each $\mu$ in $\mathcal M_c(G)$, $f$ in $\mathcal C(G)$ and $x$ in $G$. 
It is clear that, equipped with the action $f\mapsto \mu*f$, the space $\mathcal C(G)$ is a topological left module over $\mathcal M_c(G)$. 
\vskip.1cm

In what follows $K$ will always denote a compact subgroup in $G$ with normed Haar measure $\omega$. We recall that $\omega$ is left invariant, right invariant and inversion invariant. 
\vskip.1cm

The function $f$ in $\mathcal C(G)$ is called {\it bi-$K$-invariant}, or simply {\it $K$-invariant} if  it satisfies
$$
f(k x l)=f(x)
$$
for each $x$ in $G$ and $k,l$ in $K$. All $K$-invariant functions form a closed subspace in the topological vector space $\mathcal C(G)$, which we denote by $\mathcal C(G//K)$. Clearly, $\widecheck{f}$ is $K$-invariant, if $f$ is $K$-invariant.
\vskip.1cm

For each $f$ in $\mathcal C(G)$ the function defined by
$$
f^{\#}(x)=\int_K\,\int_K f(k x l)\,d\omega(k)\,d\omega(l),
$$
for $x$ in $G$ is called the {\it projection} of $f$. Obviously, $f^{\#}$ is $K$-invariant, further the function $f$ in $\mathcal C(G)$ is $K$-invariant if and only if $f^{\#}=f$. We note that if $G$ is Abelian, then the space $\mathcal C(G//K)$ can be identified with $\mathcal C(G/K)$. Clearly, $(f^{\#})\,\widecheck{}=(\widecheck{f})^{\#}$ holds for each $f$ in $\mathcal C(G)$.
\vskip.1cm

A measure $\mu$ in $\mathcal M_c(G)$ is called {\it $K$-invariant}, if
$$
\langle \mu,f\rangle=\langle \mu,f^{\#}\rangle
$$
holds for each $f$ in $\mathcal C(G)$.
We also define the {\it projection} $\mu^{\#}$ of $\mu$ by the equation
$$
\langle \mu^{\#},f\rangle =\int_G \int_K \int_K f(k x l)\, d\omega(k)\, d\omega(l)\, d\mu(x)
$$
for each $f$ in $\mathcal C(G)$. Obviously, $\mu^{\#}$ is in $\mathcal M_c(G)$ and it is $K$-invariant, further $\langle \mu^{\#},f\rangle =\langle \mu,f^{\#}\rangle$. It follows that the measure $\mu$ in $\mathcal M_c(G)$ is $K$-invariant if and only if $\mu^{\#}=\mu$. 
\vskip.1cm

A special role is played by the projections of the evaluation functionals $\delta_y$ defined by
$$
\langle\delta_y, f\rangle=f(y)
$$ 
for each $y$ in $G$ and $f$ in $\mathcal C(G)$, hence
$$
\langle\delta_y^{\#}, f\rangle=f^{\#}(y)=\int_K f(k y l)\,d\omega(k)\,d\omega(l).
$$
Using these measures we define $K$-translation by $y$ in $G$ for each $f$ in $\mathcal C(G)$ as the function $\tau_y f$ defined by the equation
$$
\tau_y f(x)=\delta_{y^{-1}}^{\#}*f(x)=\int_K \int_K f(k y l x)\,d\omega(k)\, d\omega(l)
$$
for each $x$ in $G$. In particular, for each $K$-invariant function $f$ we have
$$
\tau_y f(x)=\int_K f(y k x)\,d\omega(k)
$$
whenever $x,y$ are in $G$. A subset $H$ in $\mathcal C(G//K)$ is called {\it $K$-translation invariant}, if for each $f$ in $H$ and $y$ in $G$ the function $\tau_y f$ is in $H$. A closed $K$-translation invariant linear subspace of $\mathcal C(G//K)$ is called a {\it $K$-variety}. Clearly, the intersection of any  family of $K$-varieties is a $K$-variety. The intersection of all $K$-varieties including the $K$-invariant function $f$ is called the $K$-variety {\it generated by $f$}, or simply the {\it $K$-variety of $f$}, and it is denoted by $\tau(f)$. This is the closure of the linear space spanned by all $K$-translates of $f$.

\section{The dual of $\mathcal C(G//K)$}
\hskip.5cm
The following theorem  describes the space of all continuous linear functionals of the space $\mathcal C(G//K)$, that is, the dual space $\mathcal C(G//K)^{*}$.

\begin{theorem}\label{dualspace}
The dual of $\mathcal C(G//K)$ is identical with the set of the restrictions of all $K$-invariant measures in $\mathcal M_c(G)$ to $\mathcal C(G//K)$. 
\end{theorem}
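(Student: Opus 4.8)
The plan is to combine the identification of the dual of $\mathcal C(G)$ with $\mathcal M_c(G)$ and the Hahn--Banach extension theorem, and then to use the projection $\mu\mapsto\mu^{\#}$ to replace an arbitrary extending measure by a $K$-invariant one. The two inclusions are handled separately, and only one of them carries any content.

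First I would dispose of the easy inclusion. If $\mu$ is a $K$-invariant measure in $\mathcal M_c(G)$, then $\mu$ is in particular a continuous linear functional on $\mathcal C(G)$, so its restriction to the closed subspace $\mathcal C(G//K)$ is continuous on $\mathcal C(G//K)$. Hence every such restriction belongs to $\mathcal C(G//K)^{*}$.

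For the reverse inclusion, let $F$ be an arbitrary element of $\mathcal C(G//K)^{*}$. Since $\mathcal C(G//K)$ is a closed subspace of the locally convex space $\mathcal C(G)$, the Hahn--Banach theorem provides a continuous linear extension $\widetilde F$ of $F$ to all of $\mathcal C(G)$. By the identification of the dual of $\mathcal C(G)$ with $\mathcal M_c(G)$, there is a measure $\mu$ in $\mathcal M_c(G)$ with $\widetilde F(f)=\langle\mu,f\rangle$ for every $f$ in $\mathcal C(G)$. This $\mu$ need not be $K$-invariant, and the key step is to repair this: I would replace $\mu$ by its projection $\mu^{\#}$, which is $K$-invariant by construction. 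For any $f$ in $\mathcal C(G//K)$ one has $f^{\#}=f$, so that
$$
\langle\mu^{\#},f\rangle=\langle\mu,f^{\#}\rangle=\langle\mu,f\rangle=F(f),
$$
using the identity $\langle\mu^{\#},f\rangle=\langle\mu,f^{\#}\rangle$ recorded earlier. Thus the restriction of the $K$-invariant measure $\mu^{\#}$ to $\mathcal C(G//K)$ coincides with $F$, and the proof is complete.

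The one point requiring care is the invocation of Hahn--Banach: one must use the version valid in general locally convex topological vector spaces, namely that a continuous functional on a closed subspace extends to a continuous functional on the whole space, rather than the normed-space version, since $\mathcal C(G)$ carries the topology of uniform convergence on compact sets and is in general not normable. I expect this to be the only genuine subtlety; once the extension is secured, the passage to $\mu^{\#}$ is a direct computation resting on $f^{\#}=f$ for $K$-invariant $f$ and on the defining property of the projection of a measure.
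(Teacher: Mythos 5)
Your proof is correct, but it takes a genuinely different route from the paper's. You obtain the extension of $F$ to $\mathcal C(G)$ by the Hahn--Banach theorem for locally convex spaces, land on an arbitrary measure $\mu$, and then repair the lack of $K$-invariance by passing to $\mu^{\#}$. The paper instead builds the extension explicitly: it sets $\langle\widetilde{\lambda},f\rangle=\langle\lambda,f^{\#}\rangle$ for all $f$ in $\mathcal C(G)$, which is continuous because $f\mapsto f^{\#}$ is a continuous projection of $\mathcal C(G)$ onto $\mathcal C(G//K)$, and is already $K$-invariant by the idempotence $f^{\#\#}=f^{\#}$. The two constructions actually produce the same measure --- since $\langle\mu^{\#},f\rangle=\langle\mu,f^{\#}\rangle=F(f^{\#})$ for every $f$ in $\mathcal C(G)$, your $\mu^{\#}$ equals the paper's $\widetilde{\lambda}$ regardless of which Hahn--Banach extension you started from --- but the paper's route avoids Hahn--Banach (and hence any appeal to choice) entirely, at the modest cost of checking that the function projection is continuous (the sup of $|f^{\#}|$ over a compact set $C$ is dominated by the sup of $|f|$ over the compact set $KCK$). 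Your route defers that verification to the general machinery, but as you note it requires the locally convex version of the extension theorem; both arguments are complete and rest on the same identity $\langle\mu^{\#},f\rangle=\langle\mu,f^{\#}\rangle$ together with $f^{\#}=f$ for $K$-invariant $f$.
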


\begin{proof}
Suppose that $\lambda$ is in $\mathcal C(G//K)^{*}$ and we define
$$
\langle\widetilde{\lambda},f\rangle=\langle\lambda,f^{\#}\rangle
$$
for each $f$ in $\mathcal C(G)$. Then obviously $\widetilde{\lambda}$ is in $\mathcal M_c(G)$. We have
$$
\langle (\widetilde{\lambda})^{\#},f\rangle =\langle \widetilde{\lambda},f^{\#}\rangle =\langle \lambda,f^{\#\#}\rangle =\langle \lambda,f^{\#}\rangle =\langle\widetilde{\lambda},f\rangle
$$
for each $f$ in $\mathcal C(G)$, hence $\widetilde{\lambda}$ is $K$-invariant. If $f$ is in $\mathcal C(G//K)$, then $f=f^{\#}$ and we infer
$$
\langle\widetilde{\lambda},f\rangle=\langle\widetilde{\lambda},f^{\#}\rangle= \langle \lambda,f^{\#\#}\rangle =\langle \lambda,f^{\#}\rangle =\langle \lambda,f\rangle,
$$
hence $\widetilde{\lambda}$ coincides with $\lambda$ on $\mathcal C(G//K)$. This shows that $\lambda$ is the restriction of a $K$-invariant measure in $\mathcal M_c(G)$.
\vskip.1cm

Conversely, it is clear that the restriction of every $K$-invariant measure in $\mathcal M_c(G)$ to $\mathcal C(G//K)$ is a linear functional on $\mathcal C(G//K)$, hence it is an element of $\mathcal C(G//K)^{*}$.
\end{proof}

The dual of $\mathcal C(G//K)$ will be denoted by $\mathcal M_c(G//K)$. It follows from the previous theorem that with the convolution of measures restricted to $\mathcal M_c(G//K)$ the latter is a topological algebra with unit $\delta_e$. Further, $\mathcal C(G//K)$ is a left module over $\mathcal M_c(G//K)$ with the ordinary convolution $f\mapsto \mu*f$. Closed submodules of this module are exactly the $K$-varieties. Left, right and two-sided ideals in $\mathcal M_c(G//K)$ are called left $K$-ideal, right $K$-ideal and $K$-ideal, respectively.

\begin{theorem}\label{dense}
All finitely supported $K$-invariant measures form a dense subalgebra in $\mathcal M_c(G//K)$.
\end{theorem}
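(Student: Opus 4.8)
The plan is to deduce the statement from the corresponding, essentially classical, fact one level up — that the finitely supported measures form a weak*-dense subalgebra of $\mathcal M_c(G)$ — and then to transport it to $\mathcal M_c(G//K)$ through the projection $\mu\mapsto\mu^{\#}$. Throughout I identify $\mathcal M_c(G//K)$ with the $K$-invariant measures in $\mathcal M_c(G)$, as permitted by Theorem \ref{dualspace}, and I read the finitely supported $K$-invariant measures as the finite linear combinations of the measures $\delta_y^{\#}=\omega*\delta_y*\omega$ ($y\in G$). The first observation I would record is that, on the predual $\mathcal C(G//K)$, each generator acts as a point evaluation: for $K$-invariant $f$ one has $\langle\delta_y^{\#},f\rangle=f^{\#}(y)=f(y)$.

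Density I would settle first, since it is the easy half. By the identification above the predual of $\mathcal M_c(G//K)$ is $\mathcal C(G//K)$, so a linear subspace is weak*-dense precisely when no nonzero element of $\mathcal C(G//K)$ annihilates it. If $f\in\mathcal C(G//K)$ satisfies $\langle\delta_y^{\#},f\rangle=0$ for every $y$, then $f(y)=f^{\#}(y)=0$ for all $y$, hence $f=0$; therefore the span of the $\delta_y^{\#}$ is weak*-dense. Equivalently one may transport: the finitely supported measures are weak*-dense in $\mathcal M_c(G)$, their annihilator in $\mathcal C(G)$ being $\{0\}$ by Theorem \ref{stardual}; the projection is weak*-continuous, since $\mu\mapsto\langle\mu^{\#},f\rangle=\langle\mu,f^{\#}\rangle$ is continuous for each fixed $f$, and it restricts to the identity on $\mathcal M_c(G//K)$, so this continuous retraction carries a dense set onto a dense subset of its image.

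The step I expect to carry the real weight is the algebra structure, because $\#$ is not multiplicative and the generators do not multiply as in a group. Here I would compute the product of two generators directly: using bi-$K$-invariance of $f$ and the invariance of $\omega$,
\[
\langle\delta_y^{\#}*\delta_z^{\#},f\rangle=\int_K f(y\,k\,z)\,d\omega(k),\qquad\text{so}\qquad \delta_y^{\#}*\delta_z^{\#}=\int_K\delta_{ykz}^{\#}\,d\omega(k).
\]
From this I extract the closure property: the product is again $K$-invariant, which also follows abstractly from $\delta_y^{\#}*\delta_z^{\#}=(\omega*\delta_y*\omega)*(\omega*\delta_z*\omega)=\omega*(\delta_y*\omega*\delta_z)*\omega$ via $\omega*\omega=\omega$, so it stays inside the convolution algebra $\mathcal M_c(G//K)$. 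The delicate point — and the main obstacle — is exactly the interplay between convolution and projection: because $\#$ fails to be a homomorphism one cannot simply push the subalgebra of finitely supported measures forward, so the subalgebra claim has to be organized at the level of $\mathcal M_c(G)$, where finitely supported measures genuinely form a subalgebra ($\delta_a*\delta_b=\delta_{ab}$), with the displayed formula making explicit how a product of generators is rendered $K$-invariantly. Once the compatibility of the inherited convolution with $K$-invariance is in place, the two halves combine to give a weak*-dense subalgebra, as required.
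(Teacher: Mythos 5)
Your density argument is correct and, in its second form, is exactly the paper's proof: the paper takes a net of finitely supported measures $\nu_\alpha\to\mu$ in $\mathcal M_c(G)$ and observes that the weak*-continuous projection carries it to $\nu_\alpha^{\#}\to\mu^{\#}=\mu$. Your first, dual formulation (the preannihilator of the span of the $\delta_y^{\#}$ in $\mathcal C(G//K)$ is $\{0\}$ because $\langle\delta_y^{\#},f\rangle=f^{\#}(y)=f(y)$ for $K$-invariant $f$) is an equivalent and equally clean way to say the same thing. So on the half of the statement that the paper actually proves, you agree with it.

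The gap is in the subalgebra half, and your own computation exposes it. You establish $\delta_y^{\#}*\delta_z^{\#}=\int_K\delta_{ykz}^{\#}\,d\omega(k)$ and then conclude closure under convolution because this product is $K$-invariant. But $K$-invariance only places the product in $\mathcal M_c(G//K)$, which was already known from the discussion after Theorem \ref{dualspace}; it does not place it back in the linear span of the $\delta_u^{\#}$. When $K$ is infinite, $\int_K\delta_{ykz}^{\#}\,d\omega(k)$ is a continuous superposition of generators and is typically not a finite linear combination of them --- for $G=SO(n)\ltimes\R^n$ with $K=SO(n)$ it is a $K$-radial measure spread over a whole interval of radii. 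So the set in question, as you (and implicitly the paper) construe it, is not literally closed under convolution, and your final sentence ``the two halves combine to give a weak*-dense subalgebra'' does not follow. To be fair, the paper's one-line proof establishes only density and is silent on this point as well; and everywhere the theorem is later invoked (Theorems \ref{commut} and \ref{anni2}) only density of the span of the $\delta_y^{\#}$ is used. The honest repairs are either to weaken ``subalgebra'' to ``subspace'', or to state the result for the subalgebra \emph{generated} by the $\delta_y^{\#}$, which is then dense \emph{a fortiori}. You correctly flagged the non-multiplicativity of $\#$ as the delicate point, but the argument you give does not resolve it.
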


\begin{proof}
Let $\mu$ be a $K$-invariant measure and let $(\nu_{\alpha})_{\alpha\in A}$ be a generalized sequence of finitely supported measures in $\mathcal M_c(G)$ converging to $\mu$ in the weak*-topology. Then $(\nu_{\alpha}^{\#})_{\alpha\in A}$ is a generalized sequence of finitely supported measures in $\mathcal M_c(G//K)$ converging to $\mu^{\#}=\mu$ in the weak*-topology.
\end{proof}

\section{Orthogonal complements and annihilators}
\begin{theorem}\label{anni1}
Let $I$ be a left $K$-ideal. Then 
$$
I^{\perp}=\{f:\, f\in \mathcal C(G//K),\,\langle \mu,f\rangle=0\enskip\text{for each}\enskip \mu\in I\}
$$
is a $K$-variety.
\end{theorem}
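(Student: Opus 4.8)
The plan is to verify the three defining properties of a $K$-variety for $I^{\perp}$: that it is a linear subspace of $\mathcal C(G//K)$, that it is closed, and that it is $K$-translation invariant. The first is immediate, since for fixed $\mu$ the map $f\mapsto\langle\mu,f\rangle$ is linear, so $I^{\perp}$ is an intersection of kernels of linear functionals. Closedness is equally routine: each condition $\langle\mu,f\rangle=0$ cuts out a closed subset of $\mathcal C(G//K)$ because $\mu$ acts continuously (it lies in $\mathcal M_c(G//K)=\mathcal C(G//K)^{*}$), and $I^{\perp}$ is the intersection of these sets inside the closed subspace $\mathcal C(G//K)$. Hence the whole content of the statement lies in the $K$-translation invariance.

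The key step will be an adjoint identity. For every $\sigma$ in $\mathcal M_c(G)$ and $f$ in $\mathcal C(G)$ I would establish
$$
\langle\mu,\sigma*f\rangle=\langle\widecheck{\sigma}*\mu,f\rangle,
$$
which follows by writing out both sides with the definitions of $\sigma*f$, of the convolution of measures, and of $\widecheck{\sigma}$, and then interchanging the order of integration (legitimate because of compact support and continuity). Specializing to $\sigma=\delta_{y^{-1}}^{\#}$ gives $\sigma*f=\tau_y f$, so that
$$
\langle\mu,\tau_y f\rangle=\langle\widecheck{\delta_{y^{-1}}^{\#}}*\mu,f\rangle.
$$
I would then use the commutation of inversion with projection, $(\nu^{\#})\,\widecheck{}=(\widecheck{\nu})^{\#}$ (proved exactly as for functions, where $(f^{\#})\,\widecheck{}=(\widecheck{f})^{\#}$ is already recorded), together with $\widecheck{\delta_{y^{-1}}}=\delta_y$, to identify $\widecheck{\delta_{y^{-1}}^{\#}}=\delta_y^{\#}$. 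This yields $\langle\mu,\tau_y f\rangle=\langle\delta_y^{\#}*\mu,f\rangle$.

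Now the ideal hypothesis enters. Since $\delta_y^{\#}$ is $K$-invariant, it lies in $\mathcal M_c(G//K)$, and because $I$ is a \emph{left} $K$-ideal we obtain $\delta_y^{\#}*\mu\in I$ for every $\mu\in I$. As $f\in I^{\perp}$, the right-hand side above vanishes, so $\langle\mu,\tau_y f\rangle=0$ for all $\mu\in I$. Since $\tau_y f$ is again $K$-invariant — the operator $\tau_y$ maps $\mathcal C(G//K)$ into itself, as the explicit formula $\tau_y f(x)=\int_K f(kyx)\,d\omega(k)$ for $K$-invariant $f$ shows — this means $\tau_y f\in I^{\perp}$, completing the verification.

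The main obstacle I anticipate is getting the adjoint identity and the inversion–projection bookkeeping exactly right: one must check that the reflection acts on the full projection $\delta_{y^{-1}}^{\#}$ rather than on $\delta_{y^{-1}}$ alone, and that it is $\widecheck{\sigma}*\mu$ (and not $\mu*\widecheck{\sigma}$) that appears, so that the left-ideal property — precisely the hypothesis available — is the one being invoked. Everything else reduces to unwinding the definitions.
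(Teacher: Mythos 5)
Your proof is correct and follows essentially the same route as the paper: both reduce the translation invariance to the identity $\langle\mu,\tau_y f\rangle=\langle\delta_y^{\#}*\mu,f\rangle$ and then invoke the left-ideal hypothesis to conclude $\delta_y^{\#}*\mu\in I$. The only cosmetic difference is that you package the computation as a general adjoint identity $\langle\mu,\sigma*f\rangle=\langle\widecheck{\sigma}*\mu,f\rangle$ together with $\widecheck{\delta_{y^{-1}}^{\#}}=\delta_y^{\#}$, whereas the paper carries out the same integral manipulation directly for $\sigma=\delta_{y^{-1}}^{\#}$.
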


\begin{proof}
Clearly, $I^{\perp}$ is a closed linear space in $\mathcal C(G//K)$. Let $f$ be in $I^{\perp}$ and $y$ in $G$, then we have for each $\mu$ in $I$
$$
\langle \mu,\delta_{y^{-1}}^{\#}*f\rangle=\int_G \int_K \int_K f(k y l x)\,d\omega(k)\, d\omega(l)\,d\mu(x)=
$$
$$
\int_G \int_G f(u x)\, d\delta_y^{\#}(u)\,d\mu(x)=
\int_G f(t)\,d(\delta_{y}^{\#}*\mu)(t)=
\langle \delta_{y}^{\#}*\mu,f\rangle=0,
$$
as $\delta_{y}^{\#}*\mu$ is in $I$.
\end{proof}

\begin{theorem}\label{anni2}
Let $V$ be a $K$-variety. Then
$$
V^{\perp}=\{\mu:\, \mu\in \mathcal M_c(G//K),\langle \mu,f\rangle=0\enskip\text{for each}\enskip f\in V\}
$$
is a closed left $K$-ideal.
\end{theorem}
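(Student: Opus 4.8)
The plan is to check separately the two assertions contained in the statement: that $V^{\perp}$ is weak*-closed, and that it is a left $K$-ideal, that is, closed under left convolution by arbitrary members of $\mathcal M_c(G//K)$.

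Closedness I would dispose of first, as it is purely formal. For each fixed $f$ in $V$ the map $\mu\mapsto\langle\mu,f\rangle$ is weak*-continuous by the very definition of the weak*-topology on $\mathcal M_c(G//K)$, so its kernel is weak*-closed; since $V^{\perp}$ is the intersection of these kernels as $f$ ranges over $V$, it is weak*-closed as well, and it is visibly a linear subspace.

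The heart of the matter is the ideal property. Recalling that $\mathcal M_c(G//K)$ is a subalgebra under convolution, I need only show that $\nu*\mu\in V^{\perp}$ whenever $\mu\in V^{\perp}$ and $\nu\in\mathcal M_c(G//K)$. I would base this on the adjoint identity
$$
\langle\nu*\mu,f\rangle=\langle\mu,\widecheck{\nu}*f\rangle,
$$
valid for every $f$ in $\mathcal C(G)$. This I would obtain by writing $\langle\nu*\mu,f\rangle=\int\int f(xy)\,d\nu(x)\,d\mu(y)$ and observing, after the substitution implicit in the definition of $\widecheck{\nu}$, that the inner integral equals $\widecheck{\nu}*f(y)=\int f(xy)\,d\nu(x)$; Fubini applies because the measures are compactly supported and $f$ is continuous.

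Granting the identity, it remains to see that $\widecheck{\nu}*f\in V$ for $f\in V$, for then $\langle\nu*\mu,f\rangle=\langle\mu,\widecheck{\nu}*f\rangle=0$ and, $f\in V$ being arbitrary, $\nu*\mu\in V^{\perp}$. Here lies the only genuine subtlety: one must know that $\widecheck{\nu}$ is again $K$-invariant, so that it acts on the module $\mathcal C(G//K)$ and carries the submodule $V$ into itself. I would verify that projection and inversion commute on measures, namely $(\widecheck{\nu})^{\#}=\widecheck{(\nu^{\#})}$, by a one-line dualization resting on the already noted relation $(f^{\#})\widecheck{}=(\widecheck{f})^{\#}$ for functions; specializing to the $K$-invariant $\nu$, for which $\nu^{\#}=\nu$, yields $(\widecheck{\nu})^{\#}=\widecheck{\nu}$, i.e.\ $\widecheck{\nu}\in\mathcal M_c(G//K)$. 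Since $V$ is a $K$-variety, hence a closed submodule of $\mathcal C(G//K)$ over $\mathcal M_c(G//K)$, we get $\widecheck{\nu}*f\in V$, and the proof closes. The main obstacle is thus not the annihilator bookkeeping but confirming the $K$-invariance of $\widecheck{\nu}$, which is exactly what keeps the module action inside $V$.
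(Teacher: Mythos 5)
Your proof is correct, but it distributes the work differently from the paper. The paper's proof also begins with the adjoint identity, but only in the special case $\nu=\delta_u^{\#}$, where $\widecheck{\delta_u^{\#}}=\delta_{u^{-1}}^{\#}$ and the right-hand side $\langle\mu,\delta_{u^{-1}}^{\#}*f\rangle$ vanishes by the \emph{defining} property of a $K$-variety ($K$-translation invariance); the passage to general $\nu$ is then made on the measure side, using Theorem \ref{dense} (weak*-density of the finitely supported $K$-invariant measures) together with the separate weak*-continuity of $\nu\mapsto\langle\nu*\mu,f\rangle$. You instead prove the adjoint identity $\langle\nu*\mu,f\rangle=\langle\mu,\widecheck{\nu}*f\rangle$ for arbitrary $\nu$ and close the argument by invoking the full module property of $V$, i.e.\ that a $K$-variety is a closed submodule of $\mathcal C(G//K)$ over $\mathcal M_c(G//K)$, so that $\widecheck{\nu}*f\in V$ (your verification that $\widecheck{\nu}$ is again $K$-invariant, via $(\widecheck{\nu})^{\#}=(\nu^{\#})\,\widecheck{}$, is correct and necessary). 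This is legitimate because the paper asserts that closed submodules are exactly the $K$-varieties, but be aware that the paper never proves that assertion, and the direction you need ($K$-translation invariance implies stability under convolution with every $K$-invariant measure) is not formal: it requires approximating $\mu*f$ by linear combinations of translates in the topology of uniform convergence on compact sets (a vector-valued integration or Riemann-sum argument), which is more delicate than the weak*-density used by the paper. In effect the paper's route avoids ever needing that function-side approximation, whereas yours quietly relies on it; if you wanted your argument to be self-contained you would have to supply that step, at which point the measure-side reduction to $\delta_u^{\#}$ is the cheaper path.
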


\begin{proof}
Obviously, $V^{\perp}$ is a closed linear subspace in $\mathcal M_c(G//K)$. Let $\mu$ in $V^{\perp}$, $\nu$ in $\mathcal M_c(G//K)$, and $f$ in $V$. Then we have to show that
$$
\langle \nu*\mu,f\rangle=\int_G f(x y)\,d\nu(x)\,d\mu(y)=0.
$$
As convolution in $\mathcal M_c(G//K)$ is continuous in both variables, it is enough to prove this for $\nu=\delta_u^{\#}$, by Theorem \ref{dense}, where $u$ is in $G$. On the other hand, we have, as above
$$
\langle \delta_u^{\#}*\mu, f\rangle=\langle \mu, \delta_{u^{-1}}^{\#}* f\rangle=0,
$$
by the $K$-translation invariance of $V$.
\end{proof}

Obviously, the statement remains true if we assume only that $V$ is a $K$-translation invariant set.

\begin{theorem}\label{duality}
For each $K$-variety $V$ we have $V^{\perp\perp}=V$. For each closed left $K$-ideal $I$ we have $I^{\perp\perp}=I$.
\end{theorem}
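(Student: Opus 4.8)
Both identities are instances of a bipolar relationship, and the natural tool is the Hahn--Banach separation theorem applied to the dual pair $(\mathcal C(G//K),\mathcal M_c(G//K))$. In each case one inclusion is immediate from the definitions: if $f$ is in $V$ then $\langle\mu,f\rangle=0$ for every $\mu$ in $V^{\perp}$, so $f\in V^{\perp\perp}$, giving $V\subseteq V^{\perp\perp}$; dually, if $\mu$ is in $I$ then $\langle\mu,f\rangle=0$ for every $f$ in $I^{\perp}$, so $\mu\in I^{\perp\perp}$ and $I\subseteq I^{\perp\perp}$. The whole content lies in the two reverse inclusions, each of which I would obtain by separating a point lying outside the relevant closed object.

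For $V^{\perp\perp}=V$ I would work on the function side of the pairing. Suppose $f_0\in\mathcal C(G//K)$ is not in $V$. Since $V$ is a closed linear subspace of the locally convex space $\mathcal C(G//K)$, the Hahn--Banach theorem provides a continuous linear functional on $\mathcal C(G//K)$ that vanishes on $V$ but not at $f_0$. By the identification of the dual of $\mathcal C(G//K)$ with $\mathcal M_c(G//K)$ furnished by Theorem \ref{dualspace}, this functional is given by some $\mu\in\mathcal M_c(G//K)$; its vanishing on $V$ means $\mu\in V^{\perp}$, while $\langle\mu,f_0\rangle\neq 0$ shows $f_0\notin V^{\perp\perp}$. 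Hence $V^{\perp\perp}\subseteq V$, and equality follows. The only hypothesis used here is the closedness of $V$, without which the separation step would be unavailable.

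For $I^{\perp\perp}=I$ I would argue symmetrically on the measure side, where $\mathcal M_c(G//K)$ carries the weak*-topology and $I$ is by assumption a closed subspace. If $\mu_0\in\mathcal M_c(G//K)$ is not in $I$, Hahn--Banach separation in this topology yields a weak*-continuous linear functional vanishing on $I$ but not at $\mu_0$. The step requiring the most care, and the one I expect to be the main obstacle, is the identification of these weak*-continuous functionals: they must be shown to be exactly the evaluations $\mu\mapsto\langle\mu,f\rangle$ with $f\in\mathcal C(G//K)$. I would deduce this from Theorem \ref{stardual}, precomposing the given functional with the weak*-continuous projection $\mu\mapsto\mu^{\#}$ to represent it on all of $\mathcal M_c(G)$ by a continuous function; since every $\mu\in I$ is $K$-invariant, one has $\langle\mu,f\rangle=\langle\mu,f^{\#}\rangle$, so the representing function may be replaced by its projection $f^{\#}$, which is $K$-invariant, i.e.\ in $\mathcal C(G//K)$. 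Such an $f$ then lies in $I^{\perp}$ and satisfies $\langle\mu_0,f\rangle\neq 0$, whence $\mu_0\notin I^{\perp\perp}$ and $I^{\perp\perp}\subseteq I$. As before, the essential ingredient is the weak*-closedness of $I$.
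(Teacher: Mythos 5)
Your proof is correct and follows essentially the same route as the paper: the trivial inclusions from the definitions, Hahn--Banach separation combined with Theorem \ref{dualspace} on the function side, and Hahn--Banach separation combined with Theorem \ref{stardual} on the measure side. Your extra step of precomposing with the weak*-continuous projection $\mu\mapsto\mu^{\#}$ so as to apply Theorem \ref{stardual} on all of $\mathcal M_c(G)$ and then replacing the representing function by $f^{\#}$ is in fact slightly more careful than the paper, which invokes that theorem for $\mathcal M_c(G//K)$ directly without comment.
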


\begin{proof}
Obviously, we have $V^{\perp\perp}\supseteq V$ and $I^{\perp\perp}\supseteq I$. 
\vskip.1cm

Suppose now that $V^{\perp\perp}\supsetneq V$. Consequently, there is a function $f$ in $V^{\perp\perp}$ such that $f$ is not in $V$. By the Hahn--Banach Theorem, and by Theorem \ref{dualspace}, there is a $\lambda$ in $\mathcal M_c(G//K)$ such that $\langle\lambda,f\rangle\ne 0$, and $\lambda$ vanishes on $V$. This means that $\lambda$ is in $V^{\perp}$, and $f$ is not in $V^{\perp\perp}$, a contradiction. Hence $V^{\perp\perp}=V$.
\vskip.1cm

Similarly, suppose that $I^{\perp\perp}\supsetneq I$ and let $\mu$ be in $I^{\perp\perp}$ such that $\mu$ is not in $I$. As $I$ is closed, by the Hahn--Banach Theorem, there is a linear functional $\Lambda$ in the dual space $\mathcal M_c(G//K)^{*}$ such that $\Lambda$ annihilates $I$ but $\Lambda(\mu)\ne 0$. By Theorem \ref{stardual}, every weak*-continuous linear functional on a dual space arises from an element of the original space, that is, there is an $f_{\Lambda}$ in $\mathcal C(G//K)$ with $\Lambda(\nu)=\langle \nu,f_{\Lambda}\rangle$ holds for each for each $\nu$ in $\mathcal M_c(G//K)$. As $\Lambda$ annihilates $I$ we have 
$$
\Lambda(\nu)=\langle \nu,f_{\Lambda}\rangle=0
$$
for each $\nu$ in $I$, hence $f_{\Lambda}$ is in $I^{\perp}$. On the other hand, 
$$
0\ne \Lambda(\mu)=\langle \mu, f_{\Lambda}\rangle,
$$
a contradiction, as $\mu$ is in $I^{\perp\perp}$. The proof is complete.
\end{proof}

Besides orthogonal complements $V^{\perp}$ and $I^{\perp}$ we consider annihilators as follows. As $\mathcal C(G//K)$ is a topological module over the algebra $\mathcal M_c(G//K)$, the annihilators of subsets in $\mathcal C(G//K)$, resp. in $\mathcal M_c(G//K)$ have the usual meaning from module theory. Let $V$, resp. $I$ be a $K$-variety, resp. a left $K$-ideal. Then the {\it annihilator} of $V$ in $\mathcal M_c(G//K)$, resp. of $I$ in $\mathcal C(G//K)$ is defined as
$$
\Ann V=\{\mu:\, \mu\in \mathcal M_c(G//K)\enskip\text{and}\enskip \mu*f=0\enskip\text{for each}\enskip f\enskip\text{in}\enskip V\},
$$
resp.
$$
\Ann I=\{f:\, f\in \mathcal C(G//K)\enskip\text{and}\enskip \mu*f=0\enskip\text{for each}\enskip \mu\enskip\text{in}\enskip I\}.
$$
Clearly, $\Ann V$, resp. $\Ann I$ are closed subspaces in $\mathcal M_c(G//K)$, resp. in $\mathcal C(G//K)$. We shall use the notation
$$
\widecheck{H}=\{\widecheck{f}:\, f\in H\},\enskip \widecheck{L}=\{\widecheck{\mu}:\, \mu\in L\}.
$$
for each subset $H$ in $\mathcal C(G)$ and $L$ in $\mathcal M_c(G)$.

\begin{lemma} 
For each $K$-variety $V$ and left $K$-ideal $I$ we have
$$
\Ann V=(\widecheck{V})^{\perp},\hskip1cm \Ann I=(\widecheck{I})^{\perp}.
$$
\end{lemma}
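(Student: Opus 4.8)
The plan is to reduce everything to the single pointwise identity
$$
\mu*f(x)=\langle\mu,\tau_{x^{-1}}\widecheck f\rangle\qquad(x\in G),
$$
valid for every $K$-invariant measure $\mu$ and every $f$ in $\mathcal C(G//K)$. First I would verify it: starting from $\mu*f(x)=\int_G f(y^{-1}x)\,d\mu(y)$, I use the $K$-invariance of $\mu$ to replace the integrand $y\mapsto f(y^{-1}x)$ by its projection, and then the bi-$K$-invariance of $f$ and the inversion invariance of $\omega$ to recognise that projection as $y\mapsto\int_K f(y^{-1}kx)\,d\omega(k)=\tau_{x^{-1}}\widecheck f(y)$ (using $\tau_z g=\delta_{z^{-1}}^{\#}*g$ and $\tau_z g(y)=\int_K g(zky)\,d\omega(k)$ for $K$-invariant $g$). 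Specialising to $x=e$ gives at once the elementary case $\mu*f(e)=\langle\mu,\widecheck f\rangle$, which drives the easy half of both equalities: if $\mu*f=0$ then $\langle\mu,\widecheck f\rangle=\mu*f(e)=0$, and dually if $\mu*f=0$ for all $\mu$ in $I$ then each $f$ is annihilated by the pairing, so $\Ann V\subseteq(\widecheck V)^{\perp}$ and $\Ann I\subseteq(\widecheck I)^{\perp}$.

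The reverse inclusions are where the work lies, and for both I would rewrite the identity using the adjoint relation $\langle\mu,\delta_x^{\#}*g\rangle=\langle\delta_{x^{-1}}^{\#}*\mu,g\rangle$ already exploited in the proof of Theorem \ref{anni1}, obtaining
$$
\mu*f(x)=\langle\mu,\delta_x^{\#}*\widecheck f\rangle=\langle\delta_{x^{-1}}^{\#}*\mu,\widecheck f\rangle .
$$
For the ideal statement this closes immediately: if $f$ lies in $(\widecheck I)^{\perp}$ and $\mu$ is in $I$, then $\delta_{x^{-1}}^{\#}*\mu$ again lies in the left $K$-ideal $I$, so the right-hand side vanishes for every $x$; hence $\mu*f=0$ and $f\in\Ann I$. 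Thus the second equality needs nothing beyond $I$ being a \emph{left} ideal.

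For the variety statement the same computation needs the measure $\delta_{x^{-1}}^{\#}*\mu$ to remain in $(\widecheck V)^{\perp}$. By Theorem \ref{anni2} this is guaranteed as soon as $\widecheck V$ is itself a $K$-variety, for then $(\widecheck V)^{\perp}$ is a closed left $K$-ideal and so is stable under left convolution by $\delta_{x^{-1}}^{\#}$. Equivalently, one may keep the first form of the identity and use $\tau_{x^{-1}}\widecheck f\in\widecheck V$ directly. Either way the whole matter reduces to a single structural claim.

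The genuine obstacle, which I expect to be the crux, is therefore to show that $\widecheck V$ is again a $K$-variety, i.e. that it is $K$-translation invariant. Unravelling the definitions one finds $\tau_y\widecheck f=\widecheck g$ with $g(z)=\int_K f(zky^{-1})\,d\omega(k)$, so I must know that this \emph{right-hand} $K$-translate of $f$ still belongs to $V$; this is exactly the point where left and right $K$-translations must be reconciled, and in a non-commutative module it can genuinely fail. I would secure it from the commutativity of the convolution algebra $\mathcal M_c(G//K)$ — the Gelfand-pair setting in which the paper operates — under which the involution $\widecheck{\;}$ carries $K$-varieties and left $K$-ideals to objects of the same kind. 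Once $\widecheck V$ is known to be a $K$-variety the variety identity follows exactly as the ideal one, while the duality $V^{\perp\perp}=V$ and $I^{\perp\perp}=I$ of Theorem \ref{duality} serves both as a consistency check and to pass freely between the two formulations.
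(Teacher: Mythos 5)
Your argument is correct and, in its skeleton, coincides with the paper's own proof: both halves of both equalities rest on the evaluation $\mu*f(e)=\langle\mu,\widecheck f\rangle$ together with the identity $\mu*f(x)=\langle\mu,\delta_x^{\#}*\widecheck f\rangle=\langle\delta_{x^{-1}}^{\#}*\mu,\widecheck f\rangle$, and the ideal half does close using nothing more than the fact that $\delta_{x^{-1}}^{\#}*\mu$ stays in the left ideal $I$. Where you genuinely add something is precisely at the step you single out as the crux: the paper's proof simply writes ``as $\widecheck V$ is [a] $K$-variety'' and moves on, whereas you observe that $\tau_y\widecheck f=\widecheck g$ with $g(z)=\int_K f(zky^{-1})\,d\omega(k)$ is a \emph{right} $K$-translate of $f$, whose membership in the left-translation-invariant space $V$ is not automatic over a non-commutative group. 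Your repair is sound: by Theorem \ref{commut}, commutativity of $\mathcal M_c(G//K)$ amounts to $\delta_x^{\#}*\delta_{y^{-1}}^{\#}=\delta_{y^{-1}}^{\#}*\delta_x^{\#}$, and evaluating both sides on a bi-$K$-invariant $f$ gives $\int_K f(xky^{-1})\,d\omega(k)=\int_K f(y^{-1}kx)\,d\omega(k)=\tau_{y^{-1}}f(x)$, so $g=\tau_{y^{-1}}f$ lies in $V$ and $\widecheck V$ is again a $K$-variety. The one caveat worth recording is that the lemma is placed in Section 3, before the Gelfand-pair hypothesis is formally imposed (that happens only at the start of Section 6); without that hypothesis the stability of $K$-varieties under the involution $f\mapsto\widecheck f$ can genuinely fail (already for $K=\{e\}$ and a left- but not right-translation-invariant subspace of $\mathcal C(G)$), so your proof --- and, implicitly, the paper's --- is complete only under that standing assumption. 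Everything else in your write-up checks out.
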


\begin{proof}
Let $\mu$ be in $\Ann V$ and $f$ in $V$, then we have 
$$
\langle \mu, \widecheck{f}\rangle =\int_G f(x^{-1})\,d\mu(x)=\mu*f(e)=0,
$$
hence $\mu$ is in $(\widecheck{V})^{\perp}$. Conversely, let $\mu$ be in $(\widecheck{V})^{\perp}$ and $f$ in $V$. Then 
$$
\mu*f(x)=\mu^{\#}*f(x)=\int_G f(y^{-1} x)\,d\mu^{\#}(y)=
$$
$$
\int_G \int_K \int_K f(k y^{-1} l x)\,d\omega(k)\,d\omega(l)\,d\mu(y)= \int_G \int_K f(y^{-1} l x)\,d\omega(l)\,d\mu(y)=
$$
$$
\int_G \int_K \widecheck{f}(x^{-1} l y)\,d\omega(l)\,d\mu(y)=\langle \mu, \delta_x^{\#}*\widecheck{f}\rangle=0
$$
as $\widecheck{V}$ is $K$-variety, hence $\delta_x^{\#}*\widecheck{f}$ is in $\widecheck{V}$. 
\vskip.1cm

To prove the second statement we suppose that $f$ is in $(\widecheck{I})^{\perp}$ and $\mu$ is in $I$. We can proceed as follows:
$$
\mu*f(x)=(\mu*f)\,\widecheck{}\,(x^{-1})=\int_G (\mu*f)\,\widecheck{}\,(t)\,d\delta_{x^{-1}}(t)=
$$
$$
\int_G \bigl[(\mu*f)\,\widecheck{}\,\bigr]^{\#}\,(t)\,d\delta_{x^{-1}}(t)=\int_G (\mu*f)\,\widecheck{}\,(t)\,d\delta_{x^{-1}}^{\#}(t)=
$$
$$
(\mu*f)(t^{-1})\,d\delta_{x^{-1}}^{\#}(t)=\delta_{x^{-1}}^{\#}*(\mu*f)(e)=(\delta_{x^{-1}}^{\#}*\mu)*f(e)=
$$
$$
\langle \delta_{x^{-1}}^{\#}*\mu,\widecheck{f}\rangle=\langle (\delta_{x^{-1}}^{\#}*\mu)\,\widecheck{},f\rangle=0,
$$
as $\delta_{x^{-1}}^{\#}*\mu$ is in $I$. Conversely, if $f$ is in $\Ann I$ and $\mu$ is in $\widecheck{I}$, then $\widecheck{\mu}$ is in $I$ and we infer
$$
0=\widecheck{\mu}*f(e)=\int_G f(t^{-1})\,d\widecheck{\mu}(t)=\langle \mu,f\rangle,
$$
hence $f$ is in $(\widecheck{I})^{\perp}$.
\end{proof}

\begin{corollary}
For each $K$-variety $V$ and closed $K$-ideal $I$ we have
$$
\Ann (\Ann V)=V,\hskip1cm \Ann (\Ann I)=I.
$$
\end{corollary}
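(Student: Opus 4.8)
The plan is to reduce both identities to the orthogonality duality of Theorem \ref{duality} by means of the preceding Lemma, exploiting that the maps $f\mapsto\widecheck f$ on $\mathcal C(G//K)$ and $\mu\mapsto\widecheck\mu$ on $\mathcal M_c(G//K)$ are involutions that intertwine the passage to the orthogonal complement. Concretely, the first thing I would record is the commutation identity $(\widecheck H)^{\perp}=\widecheck{(H^{\perp})}$, valid both for subsets $H$ of $\mathcal C(G//K)$ and for subsets $H$ of $\mathcal M_c(G//K)$. This is immediate from the defining relation $\langle\widecheck\mu,f\rangle=\langle\mu,\widecheck f\rangle$ together with $\widecheck{\widecheck f}=f$ and $\widecheck{\widecheck\mu}=\mu$: a measure $\nu$ annihilates $\widecheck H$ precisely when $\widecheck\nu$ annihilates $H$, and symmetrically for functions.

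For the first identity I start from the Lemma, which gives $\Ann V=(\widecheck V)^{\perp}$. Since $\widecheck V$ is again a $K$-variety, Theorem \ref{anni2} shows that $(\widecheck V)^{\perp}$ is a closed left $K$-ideal, so the Lemma applies a second time, now to this ideal, yielding $\Ann(\Ann V)=(\widecheck{\Ann V})^{\perp}=\bigl(\widecheck{(\widecheck V)^{\perp}}\bigr)^{\perp}$. Applying the commutation identity and involutivity, $\widecheck{(\widecheck V)^{\perp}}=(\widecheck{\widecheck V})^{\perp}=V^{\perp}$, whence $\Ann(\Ann V)=V^{\perp\perp}=V$ by Theorem \ref{duality}. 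The second identity is entirely parallel: for a closed $K$-ideal $I$ the Lemma gives $\Ann I=(\widecheck I)^{\perp}$, where now $\widecheck I$ is again a left $K$-ideal, so by Theorem \ref{anni1} the set $\Ann I$ is a $K$-variety and the Lemma applies once more. The same two manipulations then give $\Ann(\Ann I)=\bigl(\widecheck{(\widecheck I)^{\perp}}\bigr)^{\perp}=(I^{\perp})^{\perp}=I^{\perp\perp}=I$.

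All the genuine content sits in the Lemma and in Theorem \ref{duality}; what remains is bookkeeping to guarantee that at each step the object whose complement is taken is of the correct type, so that the cited results apply. The one point I expect to require a little care is the type-checking under the check operation: that $\widecheck V$ is a $K$-variety (as already used in the proof of the Lemma) and, dually, that $\widecheck I$ is a left $K$-ideal. The latter follows from $\widecheck{\mu*\nu}=\widecheck\nu*\widecheck\mu$, so that a two-sided ideal is carried to a two-sided ideal, combined with the fact that the check of a $K$-invariant measure is again $K$-invariant. Once these membership facts are in place, the two displayed chains close up and the corollary follows.
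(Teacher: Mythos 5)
Your argument is correct and is exactly the intended derivation: the paper states this corollary without proof, leaving it to follow from the preceding Lemma combined with Theorem \ref{duality} via the involution $\widecheck{\ }$, which is precisely the chain $\Ann(\Ann V)=\bigl(\widecheck{(\widecheck V)^{\perp}}\bigr)^{\perp}=V^{\perp\perp}=V$ (and its dual) that you write out. Your extra care with the type-checking (that $\widecheck V$ is a $K$-variety, that $\widecheck I$ is again a $K$-ideal because $\widecheck{\mu*\nu}=\widecheck\nu*\widecheck\mu$, and the commutation $(\widecheck H)^{\perp}=\widecheck{(H^{\perp})}$) fills in exactly the bookkeeping the paper omits.
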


\section{Gelfand pairs}
\hskip.5cm
 We call $(G,K)$ a {\it Gelfand pair} if the algebra $\mathcal M_c(G//K)$ is commutative. Obviously, this is the case, for instance, if $G$ is commutative. We note that, although this definition is formally different from the one used in \cite{MR0621691} (see also \cite{MR2640609}) Theorem \ref{app2} and Theorem \ref{approx} below show that the two definitions coincide.
 
 \begin{theorem}\label{commut}
$(G,K)$ is a Gelfand pair if and only if the measures $(\delta_y^{\#})_{y\in G}$ form a commuting family. 
 \end{theorem}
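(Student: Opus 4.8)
The plan is to prove the two implications separately, with essentially all of the content sitting in the converse. The forward direction is immediate: if $(G,K)$ is a Gelfand pair, then by definition the algebra $\mathcal M_c(G//K)$ is commutative, and since each projection $\delta_y^{\#}$ is a $K$-invariant measure it lies in $\mathcal M_c(G//K)$; hence the family $(\delta_y^{\#})_{y\in G}$ is automatically a commuting family. No computation is needed for this half.

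For the converse I assume that $\delta_y^{\#}*\delta_z^{\#}=\delta_z^{\#}*\delta_y^{\#}$ for all $y,z$ in $G$, and I must deduce that $\mu*\nu=\nu*\mu$ for arbitrary $\mu,\nu$ in $\mathcal M_c(G//K)$. The crucial input is Theorem \ref{dense}: every element of $\mathcal M_c(G//K)$ is a weak*-limit of finitely supported $K$-invariant measures, and each of these is a finite linear combination $\sum_i c_i\delta_{y_i}^{\#}$ of the generators, since the projection map is linear and sends each point mass $\delta_{y_i}$ to $\delta_{y_i}^{\#}$. Because convolution is bilinear, the hypothesis at once implies that any two such finite combinations commute; what remains is to remove the finiteness by a limiting argument.

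I would carry out the limit in two steps, using only the separate continuity of convolution in each variable (as already invoked in the proof of Theorem \ref{anni2}), so as to sidestep any appeal to joint continuity. First, for fixed $y$ the two maps $\nu\mapsto\delta_y^{\#}*\nu$ and $\nu\mapsto\nu*\delta_y^{\#}$ are weak*-continuous and agree on the dense set of finite combinations of generators, hence they agree for every $\nu$, giving $\delta_y^{\#}*\nu=\nu*\delta_y^{\#}$. Second, for fixed $\nu$ the maps $\mu\mapsto\mu*\nu$ and $\mu\mapsto\nu*\mu$ now agree on all finite combinations of the $\delta_y^{\#}$ by the first step, hence, again by density and continuity, they agree for all $\mu$. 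This yields $\mu*\nu=\nu*\mu$ throughout, so $\mathcal M_c(G//K)$ is commutative and $(G,K)$ is a Gelfand pair. I expect the main (though modest) obstacle to be the bookkeeping rather than any substantial algebraic identity: one must make sure that the approximants furnished by Theorem \ref{dense} are genuinely finite linear combinations of the projections $\delta_y^{\#}$, and that the continuity argument is applied separately in each of the two variables in turn rather than by attempting a single joint limit.
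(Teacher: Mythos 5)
Your proof is correct and follows essentially the same route as the paper, which disposes of the theorem in one line by citing Theorem \ref{dense} and the separate continuity of convolution in $\mathcal M_c(G//K)$. You have merely spelled out the density-plus-separate-continuity argument (correctly, including the two-step limit) that the paper leaves implicit.
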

 
\begin{proof}
The necessity is obvious, and the sufficiency follows from Theorem \ref{dense} and from the separate continuity of convolution in $\mathcal M_c(G//K)$.
 \end{proof}

Let $\mathcal C_c(G)$ denote the space of compactly supported continuous complex valued functions on $G$ with the inductive limit topology of the subspaces  of continuous complex valued functions with support in a given compact subset, equipped with the topology of uniform convergence. If $\lambda$ is a fixed left Haar measure on $G$, then with the convolution
$$
\varphi*\psi(x)=\int_G \varphi(t^{-1} x) \psi(t)\,d\lambda(t)
$$ 
$\mathcal C_c(G)$ is a topological algebra. For each $\varphi$ in $\mathcal C_c(G)$ we define
$$
\langle \mu_{\varphi},f\rangle=\int_G f(x) \varphi(x)\,d\lambda(x)
$$
whenever $f$ is in $\mathcal C(G)$. 

\begin{theorem}\label{app1}
The mapping $\varphi\mapsto \mu_{\varphi}$ is a continuous algebra homomorphism of $\mathcal C_c(G)$ into $\mathcal M_c(G)$ and its image is dense in $\mathcal M_c(G)$. 
\end{theorem}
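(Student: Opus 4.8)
The plan is to verify the three assertions—well-definedness together with continuity, multiplicativity, and density of the image—in turn, treating the last as the heart of the matter. First I would observe that $\mu_\varphi$ is simply the measure $\varphi\,d\lambda$, whose support is contained in $\supp\varphi$, so $\mu_\varphi$ is indeed a compactly supported complex Borel measure, i.e. an element of $\mathcal M_c(G)$. Linearity of $\varphi\mapsto\mu_\varphi$ is immediate from the defining integral. For continuity it suffices, by the definition of the weak*-topology on $\mathcal M_c(G)$, to check that for each fixed $f$ in $\mathcal C(G)$ the scalar map $\varphi\mapsto\langle\mu_\varphi,f\rangle=\int_G f\varphi\,d\lambda$ is continuous on $\mathcal C_c(G)$; on the subspace of functions supported in a fixed compact set $C$ this follows from the estimate $|\langle\mu_\varphi,f\rangle|\le \lambda(C)\,(\sup_{C}|f|)\,(\sup_{G}|\varphi|)$, and by the very definition of the inductive limit topology, continuity on each such subspace yields continuity on $\mathcal C_c(G)$.

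Next I would establish the algebra homomorphism property $\mu_{\varphi*\psi}=\mu_\varphi*\mu_\psi$, which is a routine application of Fubini's theorem combined with the invariance of the Haar measure. Writing out $\langle\mu_{\varphi*\psi},f\rangle=\int_G f(z)\big(\int_G\varphi(t^{-1}z)\psi(t)\,d\lambda(t)\big)\,d\lambda(z)$, interchanging the order of integration, and performing the left-invariant substitution $z=tw$ in the inner integral converts this into a double integral of the form $\int_G\int_G f(tw)\,\varphi(w)\,\psi(t)\,d\lambda(t)\,d\lambda(w)$, which is exactly the expression for $\langle\mu_\varphi*\mu_\psi,f\rangle$ obtained from the definition of convolution of measures. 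Since the two functionals then agree on every $f$ in $\mathcal C(G)$, the homomorphism property follows.

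The main obstacle is the density of the image, and here I would avoid an explicit approximate-identity construction in favour of a duality argument based on Theorem \ref{stardual}. The image $\{\mu_\varphi:\varphi\in\mathcal C_c(G)\}$ is a linear subspace of $\mathcal M_c(G)$, so by the Hahn--Banach theorem it is weak*-dense precisely when the only weak*-continuous linear functional vanishing on it is the zero functional. By Theorem \ref{stardual} every such functional is of the form $\mu\mapsto\mu(f)=\int_G f\,d\mu$ for a unique $f$ in $\mathcal C(G)$, and if it annihilates the image then $\int_G f\varphi\,d\lambda=0$ for every $\varphi$ in $\mathcal C_c(G)$. It remains to deduce $f\equiv 0$, and this is where the properties of Haar measure enter: since $\lambda$ assigns positive measure to every nonempty open set, a continuous $f$ that is nonzero at some $x_0$ stays bounded away from $0$ on a neighbourhood, and testing against a suitable nonnegative bump function supported there (treating the real and imaginary parts of $f$ separately) would force the integral to be nonzero, a contradiction. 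Hence the annihilating functional is zero and the image is weak*-dense. I expect the only genuine subtlety to lie in this final deduction—handling complex-valued $f$ and invoking the full support of $\lambda$—while everything else reduces to bookkeeping with Fubini's theorem and the weak*-topology.
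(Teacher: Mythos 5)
Your proposal is correct, and its first two parts coincide in substance with the paper's proof: continuity is checked against each fixed $f$ in $\mathcal C(G)$ on the subspaces of functions supported in a fixed compact set (the paper does the same with a convergent net whose supports lie in a compact $L$), and multiplicativity is the same Fubini-plus-left-invariance computation. Where you genuinely diverge is the density claim, which the paper dispatches with the single word ``regularization'': it means approximating a given $\mu$ by $\mu*\mu_{\varphi_\alpha}=\mu_{\mu*\varphi_\alpha}$ for an approximate identity $(\varphi_\alpha)$ in $\mathcal C_c(G)$, which stays in the image and tends to $\mu*\delta_e=\mu$ in the weak*-topology. You instead argue by duality: a linear subspace of the locally convex space $(\mathcal M_c(G),\text{weak*})$ is dense iff every weak*-continuous functional annihilating it is zero, Theorem~\ref{stardual} identifies such a functional with some $f$ in $\mathcal C(G)$, and $\int_G f\varphi\,d\lambda=0$ for all $\varphi$ in $\mathcal C_c(G)$ forces $f\equiv 0$ because Haar measure has full support. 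Both routes are complete; yours reuses Theorem~\ref{stardual}, which the paper has already stated, and avoids constructing an approximate identity, while the paper's regularization is constructive and produces explicit approximants (and is the device reused later, e.g.\ in Corollary~\ref{app3}). One small caveat, common to your computation and the paper's: with the stated conventions $\varphi*\psi(x)=\int_G\varphi(t^{-1}x)\psi(t)\,d\lambda(t)$ and $\langle\mu*\nu,f\rangle=\int f(xy)\,d\mu(x)\,d\nu(y)$, the substitution actually yields $\mu_{\varphi*\psi}=\mu_\psi*\mu_\varphi$, i.e.\ an anti-homomorphism; this is a harmless mismatch of conventions (and immaterial wherever the algebras involved are commutative, as in all later applications), but it is worth flagging rather than silently identifying the two expressions.
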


\begin{proof}
Let $F(\varphi)=\mu_{\varphi}$ for each $\varphi$ in $\mathcal C_c(G)$. As the support of $\lambda$ is $G$, hence $\supp \mu_{\varphi}=\supp \varphi$. Then, clearly, the mapping $F:\mathcal C_c(G)\to\mathcal M_c(G)$ is linear. On the other hand, if $(\varphi_{\alpha})_{\alpha\in A}$ converges to $\varphi$ in $\mathcal C_c(G)$, then let $L$ be a compact set in $G$ such that all the supports of the $\varphi_{\alpha}$'s are in $L$. As $\lim_{\alpha} \|\varphi_{\alpha}-\varphi\|_{\infty}\to 0$ we have for each $f$ in $\mathcal C(G)$
$$
|\langle F(\varphi_{\alpha}),f\rangle -\langle F(\varphi),f\rangle|\leq \int_L |\varphi_{\alpha}(x)-\varphi(x)| |f(x)|\,d\lambda(x)\leq
$$
$$
\|\varphi_{\alpha}-\varphi\|_{\infty} \int_L |f(x)|\,d\lambda(x)\to 0,
$$
which proves the continuity of $F$. Let $\varphi,\psi$ be in $\mathcal C_c(G)$, then we have for each $f$ in $\mathcal C(G)$
$$
\langle F(\varphi_{\alpha})*F(\psi_{\alpha}),f\rangle=\langle \mu_{\varphi}*\nu_{\varphi},f\rangle=\int_G f(x y) \varphi(x) \psi(y)\,d\lambda(x)\,d\lambda(y)= 
$$
$$
\int_G  f(z) \bigl[\int_G  \varphi(x) \psi(x^{-1} z)\,d\lambda(x)\bigr]\,d\lambda(z)= \langle \mu_{\varphi_{\alpha}*\psi_{\alpha}},f\rangle=\langle F(\varphi_{\alpha}*\psi_{\alpha}),f\rangle,
$$
hence $F$ is an algebra homomorphism. Finally, the last statement follows by regularization.
\end{proof}

\begin{corollary}\label{app2}
Suppose that $G$ is unimodular. Then the restriction of the mapping $\varphi\to \mu_{\varphi}$ to $\mathcal C_c(G//K)$ maps $\mathcal C_c(G//K)$ into $\mathcal M_c(G//K)$ and its image is dense in $\mathcal M_c(G//K)$.
\end{corollary}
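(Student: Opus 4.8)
The plan is to reduce both assertions—that the restricted map lands in $\mathcal M_c(G//K)$ and that its image is dense—to a single commutation identity, namely that the projection $\#$ intertwines the map $\varphi\mapsto\mu_\varphi$: for every $\varphi$ in $\mathcal C_c(G)$ one has $\mu_{\varphi^{\#}}=(\mu_\varphi)^{\#}$. Granting this, the claims follow quickly. First I would record that the projection preserves compact support: since $\supp\varphi^{\#}\subseteq K\,(\supp\varphi)\,K$ and $K$ is compact, $\varphi^{\#}$ lies in $\mathcal C_c(G//K)$ whenever $\varphi$ is in $\mathcal C_c(G)$. In particular, if $\varphi$ is already $K$-invariant then $\varphi^{\#}=\varphi$, so $(\mu_\varphi)^{\#}=\mu_{\varphi^{\#}}=\mu_\varphi$, i.e. $\mu_\varphi$ is $K$-invariant; by Theorem \ref{dualspace} its restriction lies in $\mathcal M_c(G//K)$. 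This settles the first assertion.

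To prove the commutation identity I would unwind the definitions and apply the change of variables $y=kxl$ in the Haar integral. Explicitly, $\langle\mu_{\varphi^{\#}},f\rangle=\int_G f(x)\int_K\int_K\varphi(kxl)\,d\omega(k)\,d\omega(l)\,d\lambda(x)$, and for fixed $k,l$ in $K$ the substitution $y=kxl$ is measure-preserving: as $G$ is unimodular, $\lambda$ is two-sided invariant, so $d\lambda(x)=d\lambda(y)$. After substituting and using Fubini, the inner double $K$-integral becomes $\int_K\int_K f(k^{-1}yl^{-1})\,d\omega(k)\,d\omega(l)$, which equals $f^{\#}(y)$ by the inversion invariance of the Haar measure $\omega$ on $K$. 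Hence $\langle\mu_{\varphi^{\#}},f\rangle=\int_G f^{\#}(y)\varphi(y)\,d\lambda(y)=\langle\mu_\varphi,f^{\#}\rangle=\langle(\mu_\varphi)^{\#},f\rangle$ for every $f$ in $\mathcal C(G)$, which is exactly the identity.

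For the density I would exploit weak*-continuity of the projection. Given $\mu$ in $\mathcal M_c(G//K)$, view it via Theorem \ref{dualspace} as the restriction of a $K$-invariant measure in $\mathcal M_c(G)$, still denoted $\mu$, so that $\mu^{\#}=\mu$. By Theorem \ref{app1} there is a net $(\varphi_\alpha)$ in $\mathcal C_c(G)$ with $\mu_{\varphi_\alpha}\to\mu$ in the weak*-topology of $\mathcal M_c(G)$. Since $\langle\nu^{\#},f\rangle=\langle\nu,f^{\#}\rangle$, the map $\nu\mapsto\nu^{\#}$ is weak*-continuous, so $(\mu_{\varphi_\alpha})^{\#}\to\mu^{\#}=\mu$. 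By the commutation identity $(\mu_{\varphi_\alpha})^{\#}=\mu_{\varphi_\alpha^{\#}}$ with $\varphi_\alpha^{\#}$ in $\mathcal C_c(G//K)$, and testing only against $f$ in $\mathcal C(G//K)\subseteq\mathcal C(G)$ yields convergence in $\mathcal M_c(G//K)$; hence the image of $\mathcal C_c(G//K)$ is dense.

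The only genuinely delicate step is the commutation identity, where one must keep the two $K$-averagings straight and verify that the substitution $y=kxl$ leaves $\lambda$ invariant; this is where unimodularity (equivalently, triviality of the modular function on the compact $K$) and the inversion invariance of $\omega$ enter. Once that identity is in hand, the "into" part is a one-line consequence and the density is a formal application of weak*-continuity of $\#$ together with Theorem \ref{app1}; the passage from weak* convergence in $\mathcal M_c(G)$ to weak* convergence in $\mathcal M_c(G//K)$ is automatic because $\mathcal C(G//K)$ is a subspace of $\mathcal C(G)$.
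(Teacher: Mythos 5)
Your proof is correct and follows essentially the same route as the paper: the same change of variables $x\mapsto kxl$ using unimodularity to show $\mu_\varphi$ is $K$-invariant, and the same density argument via weak*-continuity of the projection applied to an approximating net from Theorem \ref{app1}. The only difference is organizational: you isolate the general identity $\mu_{\varphi^{\#}}=(\mu_\varphi)^{\#}$ up front, which the paper proves only for $K$-invariant $\varphi$ and then uses implicitly (as $\mu_{\varphi_\alpha}^{\#}=\mu_{\varphi_\alpha^{\#}}$) in the density step, so your version actually makes explicit a step the paper leaves unjustified.
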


\begin{proof}
We have to show only that if $\varphi$ is $K$-invariant, then $\mu_{\varphi}$ is $K$-invariant, too. Using the notation $F$ from the previous theorem we can write for each $f$ in $\mathcal C(G)$ and for each $k,l$ in $K$
$$
\langle \mu_{\varphi},f\rangle= \int_G f(x) \varphi(x)\,d\lambda(x)=\int_G f(k x l) \varphi(k x l)\,d\lambda(x)=\int_G f(k x l) \varphi(x)\,d\lambda(x).
$$
Integrating over $K\times K$ we have
$$
\langle \mu_{\varphi},f\rangle=\int_G \int_K \int_K f(k x l) \varphi(x)\,d\omega(k)\,d\omega(l)\,d\lambda(x)
$$
$$
=\int_G f^{\#}(x) \varphi(x)\,d\lambda(x)=\langle \mu_{\varphi},f^{\#}\rangle= \langle \mu_{\varphi}^{\#},f\rangle,
$$
that is, $\mu_{\varphi}^{\#}=\mu_{\varphi}$, hence $\mu_{\varphi}$ is $K$-invariant.
\vskip.1cm

Let $\mu^{\#}$ be in $\mathcal M_c(G//K)$ and $(\varphi_{\alpha})_{\alpha\in A}$ in $\mathcal C_c(G)$ such that 
$$
\lim_{\alpha} \mu_{\varphi_{\alpha}}=\mu.
$$
Then obviously
$$
\lim_{\alpha} \mu_{\varphi_{\alpha}}^{\#}=\lim_{\alpha} \mu_{\varphi_{\alpha}^{\#}}=\mu^{\#},
$$
and the proof is complete.
\end{proof}

\begin{theorem}\label{approx}
Let $G$ be unimodular. Then $(G,K)$ is a Gelfand pair if and only if the convolution algebra $\mathcal C_c(G//K)$ is commutative.
\end{theorem}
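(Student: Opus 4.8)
The plan is to prove the biconditional by transferring commutativity between the two algebras through the bridge established in Corollary \ref{app2}. Recall that corollary gives a continuous algebra homomorphism $\varphi\mapsto\mu_\varphi$ from $\mathcal C_c(G//K)$ into $\mathcal M_c(G//K)$ with \emph{dense} image. My strategy is to exploit that commutativity is preserved under homomorphisms with dense image in one direction, and reflected by them in the other.

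For the forward direction, suppose $(G,K)$ is a Gelfand pair, so that $\mathcal M_c(G//K)$ is commutative by definition. Since $\varphi\mapsto\mu_\varphi$ is an algebra homomorphism, for any $\varphi,\psi$ in $\mathcal C_c(G//K)$ we have $\mu_{\varphi*\psi}=\mu_\varphi*\mu_\psi=\mu_\psi*\mu_\varphi=\mu_{\psi*\varphi}$. First I would check that the map $\varphi\mapsto\mu_\varphi$ is injective on $\mathcal C_c(G//K)$: because $\lambda$ has full support, $\langle\mu_\varphi,f\rangle=\int f\varphi\,d\lambda$ determines $\varphi$ uniquely among continuous functions (test against suitable $f$), so $\mu_{\varphi*\psi}=\mu_{\psi*\varphi}$ forces $\varphi*\psi=\psi*\varphi$. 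This gives commutativity of $\mathcal C_c(G//K)$.

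For the converse, suppose $\mathcal C_c(G//K)$ is commutative. Then for all $\varphi,\psi$ the identity $\mu_\varphi*\mu_\psi=\mu_{\varphi*\psi}=\mu_{\psi*\varphi}=\mu_\psi*\mu_\varphi$ shows that the measures in the image commute pairwise. The remaining task is to propagate commutativity from this dense subalgebra to all of $\mathcal M_c(G//K)$. Here I would invoke the separate (indeed joint) continuity of convolution in $\mathcal M_c(G//K)$ together with the density from Corollary \ref{app2}: given arbitrary $\mu,\nu$ in $\mathcal M_c(G//K)$, approximate them in the weak*-topology by nets $\mu_{\varphi_\alpha}\to\mu$ and $\mu_{\psi_\beta}\to\nu$, and pass to the limit in $\mu_{\varphi_\alpha}*\mu_{\psi_\beta}=\mu_{\psi_\beta}*\mu_{\varphi_\alpha}$ first in one variable then the other, concluding $\mu*\nu=\nu*\mu$.

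The main obstacle I anticipate is precisely this last limiting argument: weak*-convergence of nets combined with only \emph{separate} continuity of convolution requires an iterated limit, and one must be careful that the order of taking limits is legitimate and that both sides converge to the desired products. This is the same mechanism already used in the proof of Theorem \ref{commut} to pass from the generating family $(\delta_y^\#)$ to the whole algebra, so I expect the cleanest route is to reduce the converse to Theorem \ref{commut} itself rather than redo the net argument: it would suffice to show that commutativity of $\mathcal C_c(G//K)$ forces the $\delta_y^\#$ to commute, or more directly to note that the image measures, being dense, pin down commutativity on the closure, which is all of $\mathcal M_c(G//K)$.
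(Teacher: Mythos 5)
Your proposal is correct and follows essentially the same route as the paper, which simply declares the forward direction obvious and derives the converse from the dense-image homomorphism of Corollary \ref{app2} together with separate weak*-continuity of convolution (the same mechanism as Theorem \ref{commut}). Your extra care about injectivity of $\varphi\mapsto\mu_\varphi$ (via the full support of $\lambda$) and about the iterated-limit argument just fills in details the paper leaves implicit.
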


\begin{proof}
The necessity is obvious and the sufficiency is a consequence of the previous theorem.
\end{proof}

Using the fact that $\mathcal C^{\infty}$ functions form a dense subset in $\mathcal C_c(G)$ with respect to the sup norm, we have the following corollary.

\begin{corollary}\label{app3}
Suppose that $G$ is unimodular. Then the restriction of the mapping $\varphi\to \mu_{\varphi}$ to $\mathcal C_c(G//K)\cap \mathcal C^{\infty}(G)$ has a dense image in $\mathcal M_c(G//K)$.
\end{corollary}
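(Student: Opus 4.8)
The plan is to combine Corollary~\ref{app2} with a regularization argument. By Corollary~\ref{app2} the measures $\mu_\psi$ with $\psi$ in $\mathcal C_c(G//K)$ are already dense in $\mathcal M_c(G//K)$, so it suffices to approximate each such $\mu_\psi$ in the weak*-topology by measures of the form $\mu_\varphi$ with $\varphi$ in $\mathcal C_c(G//K)\cap\mathcal C^\infty(G)$. Thus the whole problem reduces to approximating a fixed $K$-invariant, compactly supported continuous function $\psi$ by smooth $K$-invariant functions, in a manner compatible with the continuity of $\varphi\mapsto\mu_\varphi$ established in Theorem~\ref{app1}.

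First I would use the stated density of $\mathcal C^\infty$ functions in $\mathcal C_c(G)$ with respect to the sup norm to choose a generalized sequence $(g_\alpha)$ in $\mathcal C_c(G)\cap\mathcal C^\infty(G)$ with $\|g_\alpha-\psi\|_\infty\to 0$ and with all supports contained in a single compact set $L$; ordinary regularization by a smooth approximate identity supported near $e$ produces such $g_\alpha$. The functions $g_\alpha$ need not be $K$-invariant, so I would replace them by their projections $g_\alpha^\#$. Since $\omega$ is normed we have the contraction estimate $\|h^\#\|_\infty\le\|h\|_\infty$ for every $h$ in $\mathcal C_c(G)$, whence $\|g_\alpha^\#-\psi\|_\infty=\|(g_\alpha-\psi)^\#\|_\infty\le\|g_\alpha-\psi\|_\infty\to 0$, using $\psi^\#=\psi$. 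The support of $g_\alpha^\#$ lies in the compact set $K L K$, so all the $g_\alpha^\#$ have supports in a fixed compact set. Theorem~\ref{app1} then yields $\mu_{g_\alpha^\#}\to\mu_{\psi}$ in the weak*-topology, and since each $g_\alpha^\#$ is $K$-invariant it belongs to $\mathcal C_c(G//K)\cap\mathcal C^\infty(G)$ --- provided $g_\alpha^\#$ is indeed smooth.

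The main obstacle is exactly this last point: verifying that the projection of a smooth compactly supported function is again smooth. Here one must use that $G$ is a Lie group and $K$ a compact subgroup, so that the map $(k,l,x)\mapsto kxl$ is smooth and $K$ is a compact Lie group carrying a smooth Haar measure. Differentiation under the integral sign over the compact parameter domain $K\times K$ then shows that $g_\alpha^\#(x)=\int_K\int_K g_\alpha(kxl)\,d\omega(k)\,d\omega(l)$ is a smooth function of $x$, its derivatives being obtained by integrating the corresponding derivatives of $g_\alpha$; compactness of $K$ guarantees the required uniform domination. Once smoothness of $g_\alpha^\#$ is secured, the argument above is complete, and the density of the image of $\mathcal C_c(G//K)\cap\mathcal C^\infty(G)$ in $\mathcal M_c(G//K)$ follows.
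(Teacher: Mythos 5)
Your argument is correct and follows essentially the same route the paper intends: approximate in $\mathcal C_c(G)$ by smooth functions, pass to the projections $g_\alpha^{\#}$ to land in $\mathcal C_c(G//K)$, and combine the continuity of $\varphi\mapsto\mu_\varphi$ from Theorem~\ref{app1} with the density already established in Corollary~\ref{app2}. The paper gives only the one-line remark preceding the corollary, so your extra verification that the projection of a smooth function is again smooth (by differentiating under the integral over the compact group $K$, which presupposes $G$ is a Lie group so that $\mathcal C^{\infty}(G)$ is meaningful) supplies a detail the paper silently passes over rather than diverging from its approach.
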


For the proofs of the following results see e.g. \cite{MR2640609}.

\begin{theorem} (C.~Berg)
If $(G,K)$ is a Gelfand pair, then $G$ is unimodular.
\end{theorem}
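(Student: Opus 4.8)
The plan is to reduce the statement to the claim that the modular function $\Delta\colon G\to\R$ (the continuous homomorphism into the multiplicative group of positive reals comparing left and right Haar measure) is identically $1$, and then to detect $\Delta$ through a left Haar measure $\lambda$ on the commutative convolution algebra of bi-$K$-invariant functions. First I would record the elementary fact that, since $K$ is compact, $\Delta(K)$ is a compact subgroup of the multiplicative positive reals, hence trivial; thus $\Delta(kxl)=\Delta(x)$, so $\Delta$ is a continuous, bi-$K$-invariant function in $\mathcal C(G//K)$. The entire difficulty is then concentrated in showing $\Delta\equiv 1$.

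The key preliminary step --- and the one requiring the most care --- is to establish that the convolution algebra $\mathcal C_c(G//K)$ is commutative \emph{without} invoking Theorem \ref{approx}, since that result already presupposes unimodularity and would render the argument circular. Instead I would argue directly from the definition of a Gelfand pair, that is, from the commutativity of $\mathcal M_c(G//K)$. For bi-$K$-invariant $\varphi$ the measure $\mu_{\varphi}$ is $K$-invariant, hence lies in $\mathcal M_c(G//K)$; the verification of this given in Corollary \ref{app2} uses only invariance of $\lambda$ under left translations and under right translations by elements of the \emph{compact} group $K$ (for which $\Delta=1$), so it does not actually need global unimodularity. Combining this with the homomorphism property $\mu_{\varphi}*\mu_{\psi}=\mu_{\varphi*\psi}$ from Theorem \ref{app1} and the commutativity of $\mathcal M_c(G//K)$ yields $\mu_{\varphi*\psi}=\mu_{\psi*\varphi}$; since $\varphi\mapsto\mu_{\varphi}$ is injective on continuous functions (as $\lambda$ has full support), I obtain $\varphi*\psi=\psi*\varphi$ for all $\varphi,\psi$ in $\mathcal C_c(G//K)$.

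With commutativity in hand I would evaluate $\varphi*\psi=\psi*\varphi$ at the identity $e$. Writing both sides out gives
$$
\int_G \widecheck{\varphi}(t)\,\psi(t)\,d\lambda(t)=\int_G \widecheck{\psi}(t)\,\varphi(t)\,d\lambda(t),
$$
and applying the inversion formula $\int_G F(t^{-1})\,d\lambda(t)=\int_G F(t)\,\Delta(t)^{-1}\,d\lambda(t)$ to the right-hand side turns this into
$$
\int_G \widecheck{\varphi}(t)\,\psi(t)\,\bigl(1-\Delta(t)^{-1}\bigr)\,d\lambda(t)=0
$$
for all $\varphi,\psi$ in $\mathcal C_c(G//K)$. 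Here $\widecheck{\varphi}\,\psi$ is again a compactly supported bi-$K$-invariant function, and by taking $\psi$ to be a nonnegative bi-$K$-invariant bump localized near an arbitrary point and $\varphi$ with $\widecheck{\varphi}\equiv 1$ on its support (obtained by projecting an Urysohn function), the products $\widecheck{\varphi}\,\psi$ exhaust enough bi-$K$-invariant test functions to force the continuous, bi-$K$-invariant factor $1-\Delta^{-1}$ to vanish identically. Hence $\Delta\equiv 1$ and $G$ is unimodular.

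I expect the main obstacle to be exactly the circularity flagged above: the natural instinct is to quote Theorem \ref{approx} for the commutativity of $\mathcal C_c(G//K)$, but that statement assumes the very conclusion we are after, so commutativity of the function algebra must be re-derived from that of $\mathcal M_c(G//K)$, using only that $\Delta$ is trivial on the compact group $K$. The final analytic passage from the vanishing integral to the pointwise identity $\Delta\equiv 1$ is then a routine bi-$K$-invariant localization argument.
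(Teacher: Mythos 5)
Your proposal is correct, and it is essentially the standard proof of Berg's theorem (the one in the reference the paper cites); the paper itself offers no proof of this statement, only the citation, so there is no internal argument to compare against. You have correctly isolated the two points that actually require care. First, the circularity issue: Theorem \ref{approx} cannot be quoted, but as you observe, the computation in Corollary \ref{app2} showing that $\mu_{\varphi}$ is $K$-invariant for bi-$K$-invariant $\varphi$ only uses left invariance of $\lambda$ together with right translation by elements of $K$, where the modular function is already known to be $1$ because $\Delta(K)$ is a compact subgroup of the positive reals; likewise the identity $\mu_{\varphi}*\mu_{\psi}=\mu_{\varphi*\psi}$ in Theorem \ref{app1} needs only left invariance (note that with the paper's convention for $\varphi*\psi$ this map is actually an anti-homomorphism, but that is immaterial: injectivity of $\varphi\mapsto\mu_{\varphi}$ plus commutativity of $\mathcal M_c(G//K)$ still forces $\varphi*\psi=\psi*\varphi$). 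Second, the localization: evaluating the two convolutions at $e$ and applying the inversion formula yields $\int \widecheck{\varphi}\,\psi\,(1-\Delta^{-1})\,d\lambda=0$, and since $1-\Delta^{-1}$ is continuous and bi-$K$-invariant, testing against projections $h^{\#}$ of nonnegative bumps (whose supports lie in $K(\supp h)K$, on which $\Delta$ is constant along $K$-double cosets) does force $\Delta\equiv 1$. The argument is complete as sketched.
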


\begin{theorem}\label{commG}
Suppose that there exists a continuous involutive automorphism $\theta:G\to G$ such that $\theta(x)$ is in $K x^{-1} K$ holds for each $x$ in $G$. Then $(G,K)$ is a Gelfand pair.
\end{theorem}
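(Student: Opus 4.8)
The plan is to prove directly that the algebra $\mathcal M_c(G//K)$ is commutative, which is precisely the definition of a Gelfand pair. The engine will be the classical \emph{Gelfand trick}: I would play the push-forward under the automorphism $\theta$ --- which is an algebra \emph{homomorphism} for convolution --- against the inversion $\mu\mapsto\widecheck\mu$ --- which is an \emph{anti}-homomorphism --- and show that on $K$-invariant objects the two coincide. Before this I would record two preliminary facts. First, $\theta(K)=K$: taking $x=k\in K$ in the hypothesis gives $\theta(k)\in K k^{-1}K=K$, so $\theta(K)\subseteq K$, and equality follows from $\theta^2=\mathrm{id}$. Second, $\theta$ preserves the normed Haar measure $\omega$, since $\theta|_K$ is a continuous automorphism of the compact group $K$ and $\omega$ is the unique normed Haar measure. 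Together these guarantee that the push-forward $\theta_*\mu$, defined by $\langle\theta_*\mu,f\rangle=\langle\mu,f\circ\theta\rangle$, carries $K$-invariant measures to $K$-invariant measures and is an algebra homomorphism for convolution.

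The key step is the pointwise identity $f\circ\theta=\widecheck f$ for every bi-$K$-invariant $f$. Indeed, writing $\theta(x)=k\,x^{-1}\,l$ with $k,l\in K$ (possible by hypothesis), $K$-invariance of $f$ gives $f(\theta(x))=f(k\,x^{-1}\,l)=f(x^{-1})=\widecheck f(x)$. Dualizing, for every $K$-invariant measure $\mu$ and every $f\in\mathcal C(G//K)$ I obtain $\langle\theta_*\mu,f\rangle=\langle\mu,f\circ\theta\rangle=\langle\mu,\widecheck f\rangle=\langle\widecheck\mu,f\rangle$. Thus $\theta_*\mu$ and $\widecheck\mu$ agree as functionals on $\mathcal C(G//K)$, and both are $K$-invariant (for $\widecheck\mu$ this uses that inversion preserves $K$-invariance). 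Since a $K$-invariant measure $\nu$ satisfies $\langle\nu,f\rangle=\langle\nu,f^{\#}\rangle$ for \emph{all} $f\in\mathcal C(G)$ with $f^{\#}\in\mathcal C(G//K)$, two $K$-invariant measures that agree on $\mathcal C(G//K)$ agree everywhere; hence $\theta_*\mu=\widecheck\mu$ in $\mathcal M_c(G//K)$.

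Finally I would combine the homomorphism and anti-homomorphism properties. For $K$-invariant $\mu,\nu$ the product $\mu*\nu$ is again $K$-invariant, so the identity of the previous step applies to it, yielding
\[
\widecheck\nu*\widecheck\mu=\widecheck{\mu*\nu}=\theta_*(\mu*\nu)=\theta_*\mu*\theta_*\nu=\widecheck\mu*\widecheck\nu .
\]
Since $\mu\mapsto\widecheck\mu$ is an involutive bijection of $\mathcal M_c(G//K)$, every pair of elements of the algebra has the form $\widecheck\mu,\widecheck\nu$, so the displayed equality says exactly that the algebra is commutative; hence $(G,K)$ is a Gelfand pair. I expect the main obstacle to be the bookkeeping of the second paragraph --- verifying carefully that $\theta_*$ really sends $\mathcal M_c(G//K)$ into itself (which relies on $\theta(K)=K$ and the $\theta$-invariance of $\omega$) and that the functional identity $\theta_*\mu=\widecheck\mu$ on $\mathcal C(G//K)$ upgrades to an honest equality in the algebra. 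The convolution identities themselves are routine: push-forward under an automorphism respects the pairing against $f(xy)$, and inversion reverses products.
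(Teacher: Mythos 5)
Your argument is correct, and it is the classical ``Gelfand trick'': play the push-forward $\theta_*$ (a convolution homomorphism) against inversion $\mu\mapsto\widecheck{\mu}$ (an anti-homomorphism) and show they coincide on $K$-invariant objects via the pointwise identity $f\circ\theta=\widecheck{f}$ for bi-$K$-invariant $f$. The paper itself gives no proof of this theorem --- it simply cites van Dijk's book --- so there is nothing internal to compare against; but it is worth noting that the textbook version of this argument is usually run on the function algebra $\mathcal C_c(G//K)$ with a Haar measure, where one needs unimodularity of $G$ to make inversion an isometric anti-homomorphism. Your version works directly in the measure algebra $\mathcal M_c(G//K)$, which is exactly the paper's definition of a Gelfand pair, and so avoids any appeal to Haar measure on $G$ or to unimodularity (which in this paper is only derived \emph{from} the Gelfand pair property, via Berg's theorem). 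All the supporting steps check out: $\theta(K)=K$ follows from the hypothesis applied to $x=k\in K$ together with $\theta^2=\mathrm{id}$; $\theta_*\omega=\omega$ by uniqueness of the normed Haar measure on $K$; the upgrade from agreement of $\theta_*\mu$ and $\widecheck{\mu}$ on $\mathcal C(G//K)$ to equality as measures uses that both are $K$-invariant and that $\langle\nu,f\rangle=\langle\nu,f^{\#}\rangle$ for $K$-invariant $\nu$. The one place where you wave your hands slightly is the claim that $\theta_*$ preserves $K$-invariance of measures; if you want to make it airtight, verify the identity $f^{\#}\circ\theta=(f\circ\theta)^{\#}$ by the substitution $k\mapsto\theta(k)$, $l\mapsto\theta(l)$ in the double integral over $K\times K$, which is exactly where $\theta(K)=K$ and $\theta_*\omega=\omega$ enter.
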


\begin{corollary}\label{Gelfand}
Suppose that there exists a continuous involutive automorphism $\theta:G\to G$ such that the subgroup of all $\theta$-fixed points $K$ is compact, and every element $x$ of $G$ can be written in the form $x=k y$ with $k$ in $K$ and $\theta(y)=y^{-1}$. Then $(G,K)$ is a Gelfand pair.
\end{corollary}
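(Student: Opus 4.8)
The plan is to deduce this corollary directly from Theorem \ref{commG} by verifying that its hypotheses force the condition $\theta(x)\in Kx^{-1}K$ to hold for every $x$ in $G$; once that is established, Theorem \ref{commG} gives the conclusion with no further work. So I would begin by fixing an arbitrary $x$ in $G$ and using the assumed decomposition to write $x=ky$ with $k$ in $K$ and $\theta(y)=y^{-1}$.

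Next I would compute $\theta(x)$. Since $\theta$ is an automorphism, $\theta(x)=\theta(k)\theta(y)$. Here the key point is that $K$ is by definition the subgroup of $\theta$-fixed points, so $\theta(k)=k$, while by the choice of $y$ we have $\theta(y)=y^{-1}$; hence $\theta(x)=ky^{-1}$.

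I would then examine the right-hand side. From $x=ky$ we get $x^{-1}=y^{-1}k^{-1}$, so $Kx^{-1}K=Ky^{-1}k^{-1}K$. Since $k^{-1}$ lies in the subgroup $K$, we have $k^{-1}K=K$, and therefore $Ky^{-1}k^{-1}K=Ky^{-1}K$. On the other hand $\theta(x)=ky^{-1}$ with $k$ in $K$ lies in $Ky^{-1}\subseteq Ky^{-1}K$, whence $\theta(x)\in Ky^{-1}K=Kx^{-1}K$. As $x$ was arbitrary, the hypothesis of Theorem \ref{commG} is verified and the corollary follows.

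I do not anticipate any genuine obstacle: the argument is a short bookkeeping computation that hinges on two facts, namely the fixed-point identity $\theta(k)=k$ used to simplify $\theta(x)$, and the coset identity $k^{-1}K=K$ used to absorb the $K$-factor into the bi-coset. The only point demanding care is tracking the correct order of factors under inversion and under $\theta$, so that the double coset $Kx^{-1}K$ absorbs exactly the fixed-point factor $k$ and nothing more.
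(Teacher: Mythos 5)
Your proposal is correct and follows essentially the same route as the paper: both reduce to Theorem \ref{commG} by computing $\theta(x)=\theta(k)\theta(y)=ky^{-1}$ from the decomposition $x=ky$ and then observing that this element lies in $Kx^{-1}K$ (the paper writes $ky^{-1}=k\cdot x^{-1}\cdot k$ directly, while you absorb the factor $k^{-1}$ into the double coset, which is the same bookkeeping). No gaps.
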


\begin{proof}
Let $x=k y$ with $k$ in $K$ and $\theta(y)=y^{-1}$, then 
$$
\theta(x)=\theta(k)\cdot \theta(y)=k\cdot y^{-1}=k\cdot x^{-1}\cdot k\in K x^{-1} K, 
$$
hence the statement follows from the previous theorem.
\end{proof}

\section{Semidirect products}\label{semi}
\hskip.5cm
Let $N$ be a locally compact group and $K$ a compact topological group of continuous automorphisms of $N$. Hence, as a group, $K$ is a subgroup of the group $\Aut(N)$ of all continuous automorphisms of $N$, and $K$ is equipped with a compact topology which is compatible with the group structure, that is, $k\mapsto k(n)$ is continuous on $K$ for each $n$ in $N$.  We consider the semidirect product $G=K\ltimes N$, where the operation is defined by
\begin{equation}\label{opdef}
(k,n)\cdot (l,m)=(k\cdot l, k(m)\cdot n)
\end{equation}
for each $(k,n),(l,m)$ in $G$. The identity of this group is $e=(id,e)$, where $id$ is the identity mapping on $N$ and $e$ is the identity element of $N$, further, the inverse of $(k,n)$ is
$$
(k,n)^{-1}=(k^{-1}, k^{-1}(n^{-1})).
$$
In general, $K$ is isomorphic to the compact subgroup  $\{(k,e):\, k\in K\}$ of $G$, and $N$ is isomorphic to the normal subgroup $\{id,n):\, n\in N\}$. 
\vskip.1cm

Suppose now that $N$ is commutative and we write the operation in $N$ as addition with $e=o$. If we define $\theta:G\to G$ by
$$
\theta(k,n)=(k,-n)
$$
for each $k$ in $K$ and $n$ in $N$, then we have that 
$$
\theta[(k,n) (l,m)]=\theta(k l,k(m)+n)=(k l, -k(m)-n)=
$$
$$
(k,-n)(l,-m)=\theta(k,n) \theta(l,m),
$$
that is, $\theta$ is a continuous involutive automorphism of $G$. On the other hand, we have
$$
\theta(k,n)=(k,-n)=(k,o) (k^{-1},k^{-1}(-n)) (k,o)=(k,o) (k,n)^{-1} (k,o),
$$
that is, $\theta(k,n)$ is in $K\cdot (k,n)^{-1} \cdot K$ for each $(k,n)$ in $G$. By Theorem \ref{commG}, we have the following result.

\begin{corollary}\label{semidir}
With the above notation $G=K\ltimes N$ we have that $(G,K)$ is a Gelfand pair.
\end{corollary}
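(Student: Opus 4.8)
The plan is to exhibit a continuous involutive automorphism $\theta$ of $G=K\ltimes N$ satisfying the hypothesis $\theta(x)\in Kx^{-1}K$ of Theorem \ref{commG}, and then simply invoke that theorem. The natural candidate, given that $N$ is commutative, is the map that negates the $N$-component while fixing the $K$-component, namely $\theta(k,n)=(k,-n)$.

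First I would check that $\theta$ is indeed a continuous involutive automorphism. Involutivity is immediate, since applying $\theta$ twice restores the sign on the $N$-coordinate, and continuity follows because inversion on the topological group $N$ is continuous while the $K$-component is untouched. The key point, and the step I expect to carry the real content, is the homomorphism property. Using the operation \eqref{opdef} one computes $\theta\bigl[(k,n)(l,m)\bigr]=(kl,-k(m)-n)$, whereas $\theta(k,n)\,\theta(l,m)=(k,-n)(l,-m)=(kl,k(-m)-n)$. These two expressions agree precisely because each $k$ in $K$ is a group automorphism of $N$, so that $k(-m)=-k(m)$, and because $N$ is commutative, so that $-(k(m)+n)=-k(m)-n$ and $n\mapsto -n$ is itself a homomorphism. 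This is exactly where the two hypotheses are used: the commutativity of $N$ and the fact that $K$ consists of automorphisms. Without commutativity the negation map would fail to be a homomorphism and $\theta$ would not even be well defined as an automorphism, so I regard this verification as the only genuine obstacle in the argument.

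Next I would verify the crossing condition $\theta(x)\in Kx^{-1}K$. Writing $(k,n)^{-1}=(k^{-1},-k^{-1}(n))$ and conjugating by the embedded element $(k,o)$ of the compact subgroup $K$, a short calculation with \eqref{opdef} gives $(k,o)(k,n)^{-1}(k,o)=(id,-n)(k,o)=(k,-n)=\theta(k,n)$. Since $(k,o)$ lies in $K$, this shows $\theta(k,n)\in K(k,n)^{-1}K$ for every $(k,n)$ in $G$.

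Finally, having produced a continuous involutive automorphism $\theta$ with $\theta(x)\in Kx^{-1}K$ for all $x$ in $G$, I would apply Theorem \ref{commG} directly to conclude that $(G,K)$ is a Gelfand pair. The remaining steps beyond the homomorphism check are purely formal manipulations with the semidirect-product law and present no difficulty.
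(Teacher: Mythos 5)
Your proposal is correct and follows exactly the paper's own argument: the paper likewise defines $\theta(k,n)=(k,-n)$, verifies via \eqref{opdef} that it is a continuous involutive automorphism (using commutativity of $N$ and that each $k$ is an automorphism), shows $\theta(k,n)=(k,o)(k,n)^{-1}(k,o)\in K(k,n)^{-1}K$, and concludes by Theorem \ref{commG}. No differences worth noting.
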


In this case we can say -- somewhat loosely -- that $(N,K)$ is a Gelfand pair. The continuous function $f:K\times N\to\C$ is $K$-invariant if and only if
$$
f(k,n)=f\bigl((k',o)(k,n)(l',o)\bigr)=f\bigl(k' k l',k' (n)) 
$$
for each $k,k',l'$ in $K$ and $x$ in $\N$. With the choice $k'=l$ and $l'=k^{-1} l^{-1}$ it follows that $f(k,n)=f(id, l (n))$ for each $k,l$ in $K$ and $n$ in $N$. This means that $K$-invariant functions depend on the second variable only, that is, they can be identified with continuous functions on $N$ by restriction $f\mapsto f|_N$, and this restriction is invariant with respect to the action of $K$ on $N$: $f|_N(k(n))=f|_N(n)$ for each $k$ in $K$ and $n$ in $N$. These functions on $N$ are called {\it $K$-radial functions} and $\mathcal C(G//K)$ will be identified with the space $\mathcal C_K(N)$ of all $K$-radial functions. Hence $f$ is in $\mathcal C_K(N)$ if and only if $f:N\to\C$ is a continuous function satisfying $f(k\cdot n)=f(n)$ for each $k$ in $K$ and $n$ in $N$. The dual $\mathcal M_c(G//K)$ is the space of {\it $K$-radial measures} $\mu$ on $G$ satisfying
$$
\langle \mu,f\rangle = \int_G \int_K \int_K f(k' k l',k'(n))\,d\omega(k')\,d\omega(l')\,d\mu(k,n)
$$
for each continuous function $f:K\times N\to\C$. Clearly, $K$-radial measures can be identified with those measures $\mu$ on $N$, which satisfy 
$$
\int_N f(k(n))\,d\mu(n)=\int_N f(n)\,d\mu(n)
$$
for each continuous function $f:N\to\C$ and for every $k$ in $K$. The space of $K$-radial measures will be denoted by $\mathcal M_K(N)$.
\vskip.1cm

Given a continuous function $f:K\times N\to \C$ its $K$-projection is the function 
$$
f^{\#}(n)=\int_K f(k k' l', k(n))\,d\omega(k)\,d\omega(l')=\int_K f(k,k(n))\,d\omega(k).
$$
For each $(k,m)$ in $K\times N$ the $K$-radial measure $\delta_{(k,m)}^{\#}$ is independent of $k$: $\delta_{(k,m)}^{\#}=\delta_m^{\#}$, and for each $K$-radial function $f$ we have
$$
\tau_mf(n)=\delta_{-m}^{\#}*f(n)=\int_K f(n+k(m))\,d\omega(k)
$$

We obtain important special cases with the choice $N=\R^n$ and $K=O(n)$, or $K=SO(n)$, or $N=\C^n$ and $K=U(n)$, or $K=SU(n)$. In all these cases $(\R^n,K)$ is a Gelfand pair, and the space $\mathcal C(G//K)$, resp. $\mathcal M_c(G//K)$ will be identified with $\mathcal C_K(\R^n)$, resp. $\mathcal M_K(\R^n)$. Convolution in $\mathcal M_K(\R^n)$, resp. between $\mathcal M_K(\R^n)$ and $\mathcal C_K(\R^n)$ is the same as in $\mathcal M_c(\R^n)$, resp. between $\mathcal M_c(\R^n)$ and $\mathcal C(\R^n)$.

\section{Spherical functions}
\hskip.5cm
From now on we suppose that $(G,K)$ is a Gelfand pair. For every $f$ in $\mathcal C(G//K)$ and for each $y$ in $G$ the $K$-invariant measure
$$
D_{f;y}=\delta_{y^{-1}}^{\#}-f(y) \delta_e
$$
is called the {\it modified $K$-spherical difference} corresponding to $f$ and $y$ (see \cite{Sze14, Sze14b, Sze15}). Given $f$ in $\mathcal C(G//K)$ the closure of the $K$-ideal generated by all modified $K$-spherical differences of the form $D_{f;y}$ with $y$ in $G$ will be denoted by $M_f$.

\begin{theorem}\label{sphfunceq}
Let $f$ be in $\mathcal C(G//K)$. The $K$-ideal $M_f$ is proper if and only if $f(e)=1$, and $f$ satisfies
\begin{equation}\label{spherical}
\int_K f(x k y)\,d\omega(k)=f(x) f(y)
\end{equation}
for each $x,y$ in $G$. In this case $M_f$ is a maximal ideal and we have $\mathcal M_c(G//K)/M_f\cong \C$.
\end{theorem}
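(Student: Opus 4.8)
The plan is to identify the normalized spherical functions with the nonzero continuous algebra homomorphisms $\mathcal M_c(G//K)\to\C$ and to exhibit $M_f$ as the kernel of such a homomorphism. The computational heart is the identity
$$
\langle\delta_x^{\#}*\delta_y^{\#},g\rangle=\int_K g(x k y)\,d\omega(k),
$$
valid for every $g$ in $\mathcal C(G//K)$ and $x,y$ in $G$, which I would obtain by inserting the two $K$-projections into the definition of convolution and absorbing the outermost $K$-variables using the invariance of $g$ and of $\omega$. Consequently, for a $K$-invariant $g$ with $g(e)=1$, the weak*-continuous functional $\Lambda_g:\mu\mapsto\langle\mu,g\rangle$ is a unital algebra homomorphism precisely when $g$ satisfies \eqref{spherical}: indeed $\Lambda_g(\delta_x^{\#}*\delta_y^{\#})=\int_K g(xky)\,d\omega(k)$ while $\Lambda_g(\delta_x^{\#})\Lambda_g(\delta_y^{\#})=g(x)g(y)$. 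Since the measures $\delta_y^{\#}$ span a dense subalgebra by Theorem \ref{dense}, multiplicativity on these generators extends to all of $\mathcal M_c(G//K)$ by the separate continuity of convolution, exactly as in the proof of Theorem \ref{commut}.

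For the sufficiency I would first record that \eqref{spherical} is symmetric under $f\leftrightarrow\widecheck f$: substituting $k\mapsto k^{-1}$ and using the inversion-invariance of $\omega$ turns the equation for $\widecheck f$ into the equation for $f$ with the two arguments interchanged. Thus if $f(e)=1$ and $f$ satisfies \eqref{spherical}, then $\widecheck f$ is spherical with $\widecheck f(e)=1$, so $\Lambda:=\Lambda_{\widecheck f}$ is a nonzero continuous homomorphism by the first paragraph. A direct computation gives
$$
\Lambda(D_{f;y})=\langle\delta_{y^{-1}}^{\#},\widecheck f\rangle-f(y)\langle\delta_e,\widecheck f\rangle=\widecheck f(y^{-1})-f(y)\widecheck f(e)=f(y)-f(y)=0,
$$
so every generator $D_{f;y}$ lies in the closed proper ideal $\Ker\Lambda$; hence $M_f\subseteq\Ker\Lambda\subsetneq\mathcal M_c(G//K)$ and $M_f$ is proper.

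For the necessity together with the structure of the quotient I would pass to $A=\mathcal M_c(G//K)/M_f$ with quotient map $q$. Since $D_{f;z^{-1}}=\delta_z^{\#}-\widecheck f(z)\,\delta_e$ lies in $M_f$, we have $q(\delta_z^{\#})=\widecheck f(z)\,q(\delta_e)$ for every $z$ in $G$, so $q$ maps the dense subalgebra of finitely supported $K$-invariant measures into the line $\C\,q(\delta_e)$. As $q$ is continuous and surjective and a finite-dimensional subspace of the Hausdorff quotient $A$ is closed, it follows that $A=\C\,q(\delta_e)$. If $M_f$ is proper then $q(\delta_e)\neq0$, so $A$ is one-dimensional; hence $M_f$ is maximal and $\mathcal M_c(G//K)/M_f\cong\C$. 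Composing $q$ with the isomorphism $A\cong\C$ sending $q(\delta_e)\mapsto1$ yields a character $\Lambda_0$, which by Theorem \ref{stardual} equals $\langle\,\cdot\,,g\rangle$ for some $g$ in $\mathcal C(G//K)$; from $g(z)=\Lambda_0(\delta_z^{\#})=\widecheck f(z)$ we get $g=\widecheck f$. Finally $\Lambda_0(\delta_e)=1$ forces $f(e)=1$, and $\Lambda_0(\delta_x^{\#}*\delta_y^{\#})=\Lambda_0(\delta_x^{\#})\Lambda_0(\delta_y^{\#})$ reads $\int_K\widecheck f(xky)\,d\omega(k)=\widecheck f(x)\widecheck f(y)$, i.e. \eqref{spherical} for $\widecheck f$ and hence for $f$.

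The step I expect to be most delicate is the collapse $A=\C\,q(\delta_e)$: one must combine the density of the finitely supported $K$-invariant measures (Theorem \ref{dense}) with the fact that a one-dimensional subspace of the Hausdorff locally convex quotient is closed, so that the dense line $\C\,q(\delta_e)$ is already all of $A$. The only other point requiring care is the promotion of multiplicativity from the dense subalgebra to the whole algebra, where the separate continuity of convolution (as invoked in Theorem \ref{commut}) is precisely what is needed.
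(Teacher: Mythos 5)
Your proof is correct. The sufficiency half is essentially the paper's (the paper simply says ``$\widecheck{f}$ is in $M_f^{\perp}$'', which unpacked is exactly your statement that $M_f\subseteq\Ker\Lambda_{\widecheck{f}}$), but your necessity argument takes a genuinely different route. The paper dualizes: it uses Theorem \ref{duality} to produce a nonzero $\varphi$ in $M_f^{\perp}$, expands $\langle D_{f;y},\delta_z^{\#}*\varphi\rangle=0$ by an explicit multiple integral to force $\varphi=\varphi(e)\widecheck{f}$ and hence $f(e)=1$ and \eqref{spherical}, concludes maximality from $\dim M_f^{\perp}=1$, and then performs a second six-fold integral computation to show that $\Phi(\mu)=\langle\widecheck{\mu},f\rangle$ is multiplicative with kernel $M_f$. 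You instead work directly in the quotient $A=\mathcal M_c(G//K)/M_f$: the relation $q(\delta_z^{\#})=\widecheck{f}(z)\,q(\delta_e)$ coming from $D_{f;z^{-1}}\in M_f$, the density of the span of the $\delta_z^{\#}$ (Theorem \ref{dense}), and the closedness of a finite-dimensional subspace of the Hausdorff quotient collapse $A$ onto $\C\,q(\delta_e)$, which delivers properness $\Rightarrow$ maximality $\Rightarrow$ $A\cong\C$ in one stroke; $f(e)=1$ and \eqref{spherical} then fall out of the multiplicativity of the induced character together with the identity $\langle\delta_x^{\#}*\delta_y^{\#},g\rangle=\int_K g(xky)\,d\omega(k)$ and the $f\leftrightarrow\widecheck{f}$ symmetry of \eqref{spherical}. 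What the paper's route buys is the explicit description $M_f^{\perp}=\C\,\widecheck{f}$, which it reuses in the proof of Theorem \ref{sphchar}; what yours buys is economy, replacing the Hahn--Banach duality and the two long integral manipulations by one convolution identity plus soft topological-algebra facts. The only points that must be made explicit are the ones you already flag, plus one small one: the function $g$ supplied by Theorem \ref{stardual} lies a priori in $\mathcal C(G)$, and you should replace it by $g^{\#}$ (which changes nothing on $K$-invariant measures) so that $\langle\delta_z^{\#},g\rangle=g(z)$ and the identification $g=\widecheck{f}$ goes through.
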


\begin{proof}
Suppose that $M_f$ is a proper ideal. Then $V=M_f^{\perp}$ is a nonzero $K$-variety, by Theorems \ref{anni1} and \ref{duality}. Let $\varphi\ne 0$ be in $V$, then we have for each $x,y,z$ in $G$
$$
0=\langle D_{f;y},\delta_{z}^{\#}*\varphi\rangle=\langle \delta_{y^{-1}}^{\#}, \delta_{z}^{\#}*\varphi\rangle-f(y)\, \delta_{z}^{\#}*\varphi(e)=
$$
$$
(\delta_{z}^{\#}*\varphi)^{\#}(y^{-1})-f(y)\, \int_K \int_K \varphi(k z^{-1} l)\,d\omega(k)\,d\omega(l)=
$$
$$
\int_K \int_K \int_K \varphi(z^{-1} l l' y^{-1} k')\,d\omega(l)\,d\omega(l')\,d\omega(k')-f(y)\, \int_K \int_K \varphi(k z^{-1} l)\,d\omega(k)\,d\omega(l)=
$$
$$
=\int_K \varphi(z^{-1} l y^{-1})\,d\omega(l) -f(y)\,\int_K \varphi(z^{-1} l)\,d\omega(l)=
$$
$$
\int_K \varphi(z^{-1} l y^{-1})\,d\omega(l)-f(y)\varphi(z^{-1}).
$$
The substitution $z=e$ gives $\varphi(y^{-1})=f(y)\varphi(e)$ thus $\varphi(e)\ne 0$ and we obtain equation \eqref{spherical} and $f(e)=1$.
\vskip.1cm

We have proved that $V$ consists of all constant multiples of $\widecheck{f}$. In particular, $V$ is a one dimensional vector space, which implies, by Theorem \ref{duality}, that $M_f$ is a maximal ideal. We show that $\mathcal M_c(G//K)/M_f$, as an algebra, is isomorphic to the algebra of complex numbers.
\vskip.1cm

For each $\mu$ in $\mathcal M_c(G//K)$ we define
$$
\Phi(\mu)=\langle\widecheck{\mu},f\rangle.
$$
Clearly, $\Phi:\mathcal M_c(G//K)\to\C$ is a surjective continuous linear functional. For $\mu,\nu$ in $\mathcal M_c(G//K)$ we have
$$
\Phi(\mu*\nu)=\langle \mu*\nu,\widecheck{f}\rangle=\int_G \int_G \widecheck{f}(u v)\,d\mu(u)\,d\nu(v)=
\int_G \int_G \widecheck{f}(u v)\,d\mu^{\#}(u)\,d\nu^{\#}(v)=
$$
$$
\int_G \int_G \int_K \int_K \int_K \int_K \widecheck{f}(k u l l' v k')\,d\omega(k)\,d\omega(l)\,d\omega(l')\,d\omega(k')\,d\mu(u)\,d\nu(v)=
$$
$$
\int_G \int_G \int_K \widecheck{f}(u l v)\,d\omega(l)\,d\mu(u)\,d\nu(v)= \int_G \int_G \int_K f(u l v)\,d\omega(l)\,d\widecheck{\mu}(u)\,d\widecheck{\nu}(v)=
$$
$$
\int_G \int_G f(u) f(v) \,d\widecheck{\mu}(u)\,d\widecheck{\nu}(v)=\Phi(\mu) \Phi(\nu),
$$
that is, $\Phi$ is an algebra homomorphism. On the other hand,
$$
\Phi(D_{f;y})=\Phi(\delta_{y^{-1}}^{\#})-f(y)=
$$
$$
\langle (\delta_{y^{-1}}^{\#})\,\widecheck{},f\rangle-f(y)= \langle \delta_{y}^{\#},f\rangle-f(y)=f^{\#}(y)-f(y)=0,
$$
as $f$ is $K$-invariant. It follows that $M_f$ is a subset of the kernel of $\Phi$. As $\Phi$ is continuous, its kernel is a closed maximal ideal, hence, in fact, $M_f$ is the kernel of $\Phi$. As $\Phi$ is surjective, we have $\mathcal M_c(G//K)/M_f\cong \C$.
\vskip.1cm

Now we suppose that $f(e)=1$, and $f$ satisfies \eqref{spherical}. Then it is easy to check that $\widecheck{f}$ is in $M_f^{\perp}$, hence $M_f$ is proper, by Theorem \ref{duality}. The theorem is proved.
\end{proof}

We call the nonzero $K$-invariant function $f$ a {\it $K$-spherical function}, if it satisfies equation \eqref{spherical} for each $x,y$ in $G$. In this case $f(e)=1$. By the previous theorem, there is a one-to-one correspondence between $K$-spherical functions on $G$ and those maximal ideals of the algebra $\mathcal M_c(G//K)$ whose residual algebra is topologically isomorphic to $\C$. Such maximal ideals are called -- in accordance with the terminology in the commutative case -- {\it exponential maximal ideals} (see e.g. \cite{Sze14}).
In other words, a maximal ideal in $\mathcal M_c(G//K)$ is exponential, if it is the kernel of a continuous algebra homomorphism of $\mathcal M_c(G//K)$ onto $\C$. In particular, every exponential maximal ideal is closed. It follows immediately that, in the case of commutative $G$, the $K$-spherical functions are exactly the exponentials of the group $G/K$.
\vskip.1cm

We have the following characterization of $K$-spherical functions. We call the function $f$ in $\mathcal C(G)$ {\it normed} if $f(e)=1$.

\begin{theorem}\label{sphchar}
The $f$ be a continuous $K$-invariant function. Then the following statements are equivalent:
\begin{enumerate}[i)]
\item The function $f$ is a $K$-spherical function.
\item The function $f$ is nonzero and satisfies \eqref{spherical} for each $x,y$ in $G$.
\item The function $f$ is normed and for each $K$-invariant measure $\mu$ there exists a complex number $\lambda_{\mu}$ such that $\mu*f=\lambda_{\mu}\cdot f$
\item The function $f$ is a common normed eigenfunction of all translation operators $\tau_y$ with $y$ in $G$.
\item The function $f$ is normed and the ideal $M_f$ is an exponential maximal ideal.
\item The function $f$ is normed and the mapping $\mu\mapsto \langle \mu,\widecheck{f}\rangle$ is a nonzero multiplicative functional of the algebra $\mathcal M_c(G//K)$ with kernel $M_f$.
\end{enumerate}
\end{theorem}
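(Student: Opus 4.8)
The plan is to establish the six statements as equivalent by routing everything through (ii) and leaning on Theorem \ref{sphfunceq}, whose proof already contains the two genuinely computational facts I need. Since $f$ is assumed continuous and $K$-invariant throughout, (i) and (ii) are literally the definition of a $K$-spherical function, so that equivalence is immediate. I would also record at the outset that (ii) forces $f$ to be normed: evaluating \eqref{spherical} at $x=y=e$ and using bi-$K$-invariance gives $f(e)=f(e)^2$, while the case $f(e)=0$ is excluded because setting $y=e$ would then force $f\equiv 0$; hence $f(e)=1$. This reconciles the normalization clauses appearing in (iii)--(vi).

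For the analytic core I would prove the cycle (ii) $\Rightarrow$ (iii) $\Rightarrow$ (iv) $\Rightarrow$ (ii). The first implication is a direct computation of $\mu*f$ for a $K$-invariant $\mu$: writing $\mu=\mu^{\#}$, unfolding the projection, discarding the leading $K$-factor by left $K$-invariance of $f$, and applying \eqref{spherical} yields $\mu*f=\langle\mu,\widecheck{f}\rangle\,f$, so the eigenvalue is $\lambda_{\mu}=\langle\mu,\widecheck{f}\rangle$. The implication (iii) $\Rightarrow$ (iv) is then the special case $\mu=\delta_{y^{-1}}^{\#}$, since $\tau_y f=\delta_{y^{-1}}^{\#}*f$ and $\delta_{y^{-1}}^{\#}$ is $K$-invariant. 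For (iv) $\Rightarrow$ (ii) I would evaluate the eigenvalue equation $\tau_y f=\lambda_y f$ at $x=e$; using $\tau_y f(e)=\int_K f(yk)\,d\omega(k)=f(y)$ (bi-$K$-invariance) together with $f(e)=1$ identifies $\lambda_y=f(y)$, so that $\int_K f(ykx)\,d\omega(k)=f(y)f(x)$, which is exactly \eqref{spherical}.

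Finally I would tie in the ideal-theoretic statements. The equivalence (ii) $\Leftrightarrow$ (v) is Theorem \ref{sphfunceq}: if $f$ is normed and satisfies \eqref{spherical} then $M_f$ is a maximal ideal with residue algebra $\C$, that is, exponential; conversely an exponential $M_f$ is in particular proper, so that theorem returns $f(e)=1$ and \eqref{spherical}. For (vi), the proof of Theorem \ref{sphfunceq} already verifies that under \eqref{spherical} the functional $\Phi(\mu)=\langle\mu,\widecheck{f}\rangle$ is a nonzero continuous algebra homomorphism onto $\C$ with kernel $M_f$, giving (ii) $\Rightarrow$ (vi); conversely, a nonzero multiplicative functional of this form is weak*-continuous and surjective, so $M_f=\Ker\Phi$ is an exponential maximal ideal, which is (v), already shown equivalent to (ii).

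The step I expect to require the most care is (iv) $\Rightarrow$ (ii): the content is not the manipulation but the correct identification of the common eigenvalue as $\lambda_y=f(y)$, which is precisely where the normalization $f(e)=1$ and the bi-$K$-invariance of $f$ are both essential; without pinning down the eigenvalue one cannot recover the exact functional equation \eqref{spherical}. Everything else is bookkeeping layered on top of Theorem \ref{sphfunceq}.
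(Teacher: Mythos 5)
Your proposal is correct, and it reorganizes the logic in a way that genuinely differs from the paper at two junctures. The paper proves a single chain $i)\Leftrightarrow ii)\Rightarrow iii)\Rightarrow iv)\Rightarrow v)\Rightarrow vi)\Rightarrow ii)$: to get from $iv)$ to $v)$ it observes that a common eigenfunction generates a one-dimensional $K$-variety, passes to $\tau(\widecheck{f})^{\perp}$, and invokes the duality Theorem \ref{duality} to conclude that $M_f$ is maximal; and it closes the loop at $vi)\Rightarrow ii)$ by an explicit convolution computation recovering $f(x)f(y)=\int_K f(ylx)\,d\omega(l)$ from multiplicativity on the measures $\delta_{x^{-1}}^{\#}$. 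You instead close a short cycle $ii)\Rightarrow iii)\Rightarrow iv)\Rightarrow ii)$ by evaluating $\tau_yf=\lambda_yf$ at $e$ to pin down $\lambda_y=f(y)$ --- an elementary argument that avoids Theorem \ref{duality} entirely --- and then attach $v)$ and $vi)$ to $ii)$ via Theorem \ref{sphfunceq}, with $vi)\Rightarrow v)$ reduced to the definition of an exponential maximal ideal (noting weak*-continuity of $\mu\mapsto\langle\mu,\widecheck{f}\rangle$ and that a nonzero homomorphism into $\C$ is onto). Your route is more self-contained and makes the role of the normalization $f(e)=1$ explicit (you correctly derive it from \eqref{spherical} via $f(e)=f(e)^2$ and the exclusion of $f(e)=0$); the paper's single cycle is tidier bookkeeping but leans more heavily on the duality machinery. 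Both proofs share the two computational cores, namely the identity $\mu*f=\langle\mu,\widecheck{f}\rangle\,f$ and the content of Theorem \ref{sphfunceq}, so the difference is one of architecture rather than substance.
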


\begin{proof}
The first two statements are equivalent, by definition.
\vskip.1cm

Suppose that $f$ is nonzero and satisfies \eqref{spherical} for each $x,y$ in $G$.
Let $\mu$ be a $K$-invariant measure, then we have
$$
\mu*f(x)=\int_G f(y^{-1} x)\,d\mu(y)=\int_G \widecheck{f}(x^{-1} y)\,d\mu(y)= \int_G \widecheck{f}(x^{-1} y)\,d\mu^{\#}(y)=
$$
$$
\int_G \int_K \int_K \widecheck{f}(x^{-1} k y l)\,d\omega(k)\,d\omega(l)\,d\mu(y)= \int_G \int_K  \widecheck{f}(x^{-1} k y)\,d\omega(k)\,d\mu(y)=
$$
$$
\int_G \int_K  f(y^{-1} k x)\,d\omega(k)\,d\mu(y)=\int_G  f(y^{-1}) \,d\mu(y)\cdot f(x),
$$
which proves $iii)$ with $\lambda_{\mu}=\int_G  f(y^{-1}) \,d\mu(y)$.
\vskip.1cm

As $\tau_yf=\delta_{y^{-1}}*f$, $iii)$ obviously implies $i)$.
\vskip.1cm

If $f$ is a common normed eigenfunction of all translation operators $\tau_y$ with $y$ in $G$, then $\tau(f)$ is a one dimensional variety. It is easy to see that  $\tau(\widecheck{f})$ is a one dimensional variety, too. Hence $\tau(\widecheck{f})^{\perp}$ is a maximal ideal in $\mathcal M_c(G//K)$, by Theorem \ref{duality}. In the proof of Theorem \ref{sphfunceq} we have seen that $\tau(\widecheck{f})^{\perp}=M_f$. The proof of the statement that $M_f$ is exponential is included in the proof of Theorem \ref{sphfunceq}, too.
\vskip.1cm

Now we suppose that the ideal $M_f$ is an exponential maximal ideal and $f$ is normed. Then we define $\Phi(\mu)=\langle \mu,\widecheck{f}\rangle$ for each $\mu$ in $\mathcal M_c(G//K)$. We can perform the same calculation as in the proof of Theorem \ref{sphfunceq} to show that $\Phi$ is a multiplicative functional of the algebra $\mathcal M_c(G//K)$. As 
$$
\Phi(\delta_{e}^{\#})=\langle \delta_e,\widecheck{f}\rangle=\widecheck{f}(e)=f(e)=1,
$$
hence $\Phi$ is nonzero. The statement about the kernel of $\Phi$ is proved in Theorem \ref{sphfunceq}, too.
\vskip.1cm

Finally, we suppose that the mapping $\mu\mapsto \langle \mu,\widecheck{f}\rangle$ is a nonzero multiplicative functional of the algebra $\mathcal M_c(G//K)$ with kernel $M_f$ and $f$ is normed. We have for each $x$ in $G$
$$
f(x)=\langle\delta_{x^{-1}}^{\#},\widecheck{f}\rangle=\Phi(\delta_{x^{-1}}^{\#}),
$$
hence
$$
f(x) f(y)=\Phi(\delta_{x^{-1}}^{\#}) \Phi(\delta_{y^{-1}}^{\#})=\Phi(\delta_{x^{-1}}^{\#}*\delta_{y^{-1}}^{\#})=
\langle \delta_{x^{-1}}^{\#}*\delta_{y^{-1}}^{\#},\widecheck{f}\rangle=
$$
$$
\int_G \int_G \widecheck{f}(u v)\,d\delta_{x^{-1}}^{\#}(u)\,d\delta_{y^{-1}}^{\#}(v)=
$$
$$
\int_G \int_G \int_K \int_K \int_K \int_K \widecheck{f}(k u l k' v l')\,d\omega(k)\,\,d\omega(l)\,d\omega(k')\,d\omega(l')\,d\delta_{x^{-1}}(u)\,d\delta_{y^{-1}}(v)=
$$
$$
\int_G \int_G \int_K \widecheck{f}(u l  v ) \,d\omega(l)\,d\delta_{x^{-1}}(u)\,d\delta_{y^{-1}}(v)=
$$
$$
\int_K \widecheck{f}(x^{-1} l  y^{-1} ) \,d\omega(l)=\int_K f(y l x) \,d\omega(l),
$$
and the theorem is proved.
\end{proof}

\section{The case $N=\R^n$ and $K=SO(n)$}\label{spec}
\hskip.5cm
We consider the general situation exhibited in Section \ref{semi} with $N=\R^n$ and $K=SO(n)$, the {\it special orthogonal group}. In this case the semidirect product $G=SO(n)\ltimes \R^n$ is called the {\it group of Euclidean motions} (see also \cite{MR2640609}). The elements $(k,a)$ in $G$ can be thought as the product of a rotation $k$ in $SO(n)$ and a translation by $a$ in $\R^n$. Hence the pair $g=(k,a)$ operates on $\R^n$ by the rule
$$
g\cdot x=k\cdot x+a
$$
for each $x$ in $\R^n$. By Corollary \ref{semidir}, we conclude that $(\R^n,SO(n))$ is a Gelfand pair. $K$-radial functions are those continuous functions $f:\R^n\to\C$ satisfying
$$
f(x)=f(k\cdot x)
$$
whenever $x$ is in $\R^n$ and $k$ is a real orthogonal $n\times n$ matrix with \hbox{determinant $+1$.} Similarly, the compactly supported measure $\mu$ is $K$-radial if and only if it satisfies
$$
\int_{\R^n} f(x)\,d\mu(x)=\int_{\R^n} f(k\cdot x)\,d\mu(x)
$$
for each continuous function $f:\R^n\to\C$ and for each real orthogonal $n\times n$ matrix $k$ with \hbox{determinant $+1$.}
\vskip.1cm

The proof of the following result can be found in \cite{MR2640609}.

\begin{theorem}\label{Lap}
The $K$-radial function $\varphi:\R^n\to\C$ is a $K$-spherical function if and only if it is $\mathcal C^{\infty}$, it is an eigenfunction of the Laplacian, and $\varphi(0)=1$.
\end{theorem}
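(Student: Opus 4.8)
The plan is to transcribe the abstract spherical equation \eqref{spherical} into a concrete mean-value equation on $\R^n$ and then to read off the three listed properties as analytic reformulations. First I would compute, using the semidirect product multiplication $(k,a)(l,b)=(kl,k(b)+a)$ and the identification $f\leftrightarrow\varphi=f|_{\R^n}$ from Section \ref{semi}, that \eqref{spherical} becomes
$$
\int_{SO(n)}\varphi(a+k\cdot b)\,d\omega(k)=\varphi(a)\,\varphi(b)
$$
for all $a,b$ in $\R^n$. Since $SO(n)$ acts transitively on each sphere, the left-hand side is precisely the spherical mean $M_{|b|}\varphi(a):=\frac{1}{|S^{n-1}|}\int_{S^{n-1}}\varphi(a+|b|\,\sigma)\,d\sigma$. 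The normalization $\varphi(0)=1$ is immediate in both directions: for a spherical function it is the identity $f(e)=1$ from Theorem \ref{sphfunceq}, and in the converse it is assumed.

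For the forward implication I would first establish smoothness by regularization. By Theorem \ref{sphchar} iii), every $K$-radial measure $\mu$ acts on $\varphi$ as $\mu*\varphi=\lambda_{\mu}\varphi$ with $\lambda_{\mu}=\int\varphi(y^{-1})\,d\mu(y)$. Taking $\mu=\mu_{\psi}$ for a smooth, compactly supported, radial $\psi\ge 0$ concentrated near the origin, the convolution $\mu_{\psi}*\varphi$ is ordinary convolution on $\R^n$ (as noted at the end of Section \ref{semi}), hence $\mathcal C^{\infty}$, while $\lambda_{\mu_{\psi}}=\int\varphi\,\psi\to\varphi(0)=1$ as the support shrinks; choosing $\psi$ with $\lambda_{\mu_{\psi}}\ne 0$ gives $\varphi=\lambda_{\mu_{\psi}}^{-1}(\mu_{\psi}*\varphi)\in\mathcal C^{\infty}$. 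Once $\varphi$ is smooth I would pass to the Laplacian through the expansion $M_r\varphi=\varphi+\frac{r^2}{2n}\Delta\varphi+o(r^2)$: the mean-value equation gives $M_r\varphi(a)=\widetilde\varphi(r)\varphi(a)$ with $\widetilde\varphi(r)=\varphi$ evaluated at radius $r$, whence
$$
\Delta\varphi(a)=\lim_{r\to 0}\frac{2n}{r^2}\bigl(M_r\varphi(a)-\varphi(a)\bigr)=\Bigl(\lim_{r\to 0}\frac{2n}{r^2}(\widetilde\varphi(r)-1)\Bigr)\varphi(a)=\lambda\,\varphi(a),
$$
the limit existing because $\widetilde\varphi$ is a smooth even function of $r$ with $\widetilde\varphi(0)=1$.

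For the converse I would fix $b$ and compare the two functions $a\mapsto M_{|b|}\varphi(a)$ and $a\mapsto\varphi(a)\varphi(b)$. Both are radial in $a$ (the first because $\varphi$ is radial and $\omega$ is invariant, the second obviously), both satisfy $\Delta w=\lambda w$ (the spherical mean commutes with $\Delta$, and $\Delta\varphi=\lambda\varphi$), both are smooth at the origin, and both take the value $\varphi(b)$ at $a=0$. I would then invoke uniqueness: writing $w(a)=W(|a|)$, the equation becomes the ODE $W''+\frac{n-1}{r}W'=\lambda W$, a second order equation with a regular singular point at $r=0$ whose indicial exponents are $0$ and $2-n$; hence for $n\ge 2$ the solutions regular at the origin form a one dimensional space, determined by $W(0)$. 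Consequently the two functions coincide for every $a$, which is exactly the mean-value equation, i.e.\ \eqref{spherical}. The main obstacle, and the step demanding the most care, is precisely this uniqueness at the origin: one must carry out the indicial analysis so that smoothness at $r=0$ genuinely singles out a one dimensional solution space, and one must check that the spherical mean of an eigenfunction is again an eigenfunction that is regular at the origin, rather than merely a solution of the ODE on $r>0$.
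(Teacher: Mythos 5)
Your argument is correct, and it is essentially the classical proof. Note that the paper itself offers no proof of Theorem \ref{Lap}: it simply refers to van Dijk's book \cite{MR2640609}, so there is nothing internal to compare against; your write-up is a sound self-contained version of the standard argument. The transcription of \eqref{spherical} into the mean-value identity $\int_{SO(n)}\varphi(a+k\cdot b)\,d\omega(k)=\varphi(a)\varphi(b)$ is exactly right, the regularization step via Theorem \ref{sphchar} iii) with $\lambda_{\mu_\psi}\to\varphi(0)=1$ correctly yields smoothness, the Pizzetti expansion gives the eigenvalue equation, and the converse via uniqueness of radial eigenfunctions regular at the origin is the standard route. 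You correctly identify the one step that genuinely needs care, namely that the indicial exponents $0$ and $2-n$ force a one-dimensional space of solutions regular at $r=0$ (with the logarithmic second solution when $n=2$), and that $M_{|b|}\varphi$ is globally smooth, hence regular there. Two minor remarks: you should observe that $\varphi(0)=1$ makes $\varphi$ nonzero, which is part of the definition of a $K$-spherical function; and your use of transitivity of $SO(n)$ on spheres and of the indicial analysis tacitly assumes $n\ge 2$ --- for $n=1$ the group $SO(1)$ is trivial, \eqref{spherical} degenerates to Cauchy's exponential equation, and the statement as literally written would need to be read accordingly, which is a caveat about the theorem rather than about your proof.
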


Let $\varphi$ be a $\mathcal C^{\infty}$ $K$-radial function on $\R^n$, which is a solution of $\Delta \varphi=\lambda \varphi$ for some complex number $\lambda$. Let $\varphi_0$ be defined for real $r$ as
$$
\varphi_0(r)=\varphi(r,0,0,\dots,0),
$$
then
$$
\varphi(x)=\varphi_0(\|x\|)
$$
holds for each $x$ in $\R^n$ and $\varphi_0$ is a regular even solution of the differential equation 
$$
\frac{d^2 \varphi_0}{dr^2}+\frac{n-1}{r} \frac{d\varphi_0}{dr}=\lambda \varphi_0,
$$
hence it is proportional to the Bessel--function $J_{\lambda}$ defined by
$$
J_{\lambda}(r)=\Gamma\bigl(\frac{n}{2}\bigr)\,\sum_{k=0}^{\infty}\,\frac{\lambda^k}{k!\,\Gamma(k+\frac{n}{2})}\,\Bigl(\frac{r}{2}\Bigr)^{2k}.
$$
As $J_{\lambda}(0)=1$ follows that $\varphi$ is a $K$-spherical function if and only if 
$$
\varphi(x)=J_{\lambda}(\|x\|)
$$
holds for each $x$ in $\R^n$ with some complex number $\lambda$ (see \cite{MR2640609}).

\section{Spherical monomials}
\hskip.5cm 
The function $\varphi$ in $\mathcal C(G//K)$ is called a {\it generalized $K$-spherical monomial}, if there exists a $K$-spherical function $s$ and a natural number $n$ such that we have for each $x,y_1,y_2,\dots,y_{n+1}$
\begin{equation}\label{diffeq}
D_{s;y_1}*D_{s;y_2}*\dots*D_{s,y_{n+1}}*\varphi(x)=0.
\end{equation}

\begin{lemma}
The nonzero function $\varphi$ in $\mathcal C(G//K)$ is a generalized $K$-spherical monomial if and only if there exists a unique exponential maximal ideal in $\mathcal M_c(G//K)$ and a natural number $n$ such that
$$
M^{n+1}\subseteq \Ann \tau(\varphi).
$$
\end{lemma}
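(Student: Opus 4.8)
The plan is to reduce the whole equivalence to one identity, namely $\Ann\tau(\varphi)=\{\mu\in\mathcal M_c(G//K):\mu*\varphi=0\}$, together with the dictionary between exponential maximal ideals and $K$-spherical functions. First I would record two structural facts. Since $(G,K)$ is a Gelfand pair, $\mathcal M_c(G//K)$ is commutative; hence if $\mu*\varphi=0$, then for each translate $\tau_y\varphi=\delta_{y^{-1}}^{\#}*\varphi$ the associativity of the module action and commutativity give $\mu*\tau_y\varphi=\mu*\delta_{y^{-1}}^{\#}*\varphi=\delta_{y^{-1}}^{\#}*\mu*\varphi=\delta_{y^{-1}}^{\#}*(\mu*\varphi)=0$. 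Because $f\mapsto\mu*f$ is continuous and $\tau(\varphi)$ is the closed span of the translates, this yields $\mu*f=0$ for all $f\in\tau(\varphi)$. Thus $\Ann\tau(\varphi)=\{\mu:\mu*\varphi=0\}$, which is a \emph{closed ideal} (from $\mu*\varphi=0$ one gets $(\nu*\mu)*\varphi=\nu*(\mu*\varphi)=0$) and is proper, since $\delta_e*\varphi=\varphi\neq 0$. Secondly, by Theorem~\ref{sphfunceq} every exponential maximal ideal is exactly $M_s$ for a unique $K$-spherical function $s$, and each $M_s$ is an exponential maximal ideal.

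For the forward implication, assume $\varphi$ is a generalized $K$-spherical monomial, so \eqref{diffeq} holds for some $K$-spherical $s$ and some $n$; put $M=M_s$. Using commutativity I would regroup any $(n+1)$-fold product of elements of $M$: approximating each factor by finite sums $\sum_j\mu_j*D_{s;z_j}$ and multiplying out, every resulting term equals a measure times a product $D_{s;y_1}*\cdots*D_{s;y_{n+1}}$. By \eqref{diffeq} each such product lies in the closed ideal $\Ann\tau(\varphi)$, so every term, every limit of terms, and the closed ideal they generate remain inside it. This generated closed ideal is precisely $M^{n+1}$, whence $M^{n+1}\subseteq\Ann\tau(\varphi)$, and $M=M_s$ is an exponential maximal ideal with the required property.

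For uniqueness, suppose a second exponential maximal ideal $M'\neq M_s$ also satisfied $(M')^{m+1}\subseteq\Ann\tau(\varphi)$. Distinct maximal ideals are comaximal, $M_s+M'=\mathcal M_c(G//K)$, and in a commutative unital algebra comaximality of $M_s,M'$ forces comaximality of their powers, so $\delta_e\in M_s^{n+1}+(M')^{m+1}\subseteq\Ann\tau(\varphi)$, contradicting properness; hence $M_s$ is the unique such ideal. For the converse, given a unique exponential maximal ideal $M$ and an $n$ with $M^{n+1}\subseteq\Ann\tau(\varphi)$, write $M=M_s$ by Theorem~\ref{sphfunceq}; each $D_{s;y_i}$ lies in $M_s$, so $D_{s;y_1}*\cdots*D_{s;y_{n+1}}\in M^{n+1}\subseteq\Ann\tau(\varphi)$, and by the identity of the first paragraph this is exactly \eqref{diffeq}. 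Therefore $\varphi$ is a generalized $K$-spherical monomial.

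The main obstacle is the identification of $M^{n+1}$ with the closed ideal generated by the products $D_{s;y_1}*\cdots*D_{s;y_{n+1}}$. Because convolution in $\mathcal M_c(G//K)$ is only known to be separately continuous, the passage from arbitrary factors in $M$ to the generating products, and the subsequent closure, must be carried out through iterated limits rather than a single joint-continuity argument, while checking that multiplication by measures and passage to limits keep the products inside the closed ideal $\Ann\tau(\varphi)$. Everything else is formal once the commutativity identity $\Ann\tau(\varphi)=\{\mu:\mu*\varphi=0\}$ and the exponential-ideal/spherical-function correspondence are available.
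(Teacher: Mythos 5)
Your proof is correct and follows essentially the same route as the paper: commutativity of $\mathcal M_c(G//K)$ extends \eqref{diffeq} from $\varphi$ to all of $\tau(\varphi)$, the products $D_{s;y_1}*\cdots*D_{s;y_{n+1}}$ generate the closed ideal $M_s^{n+1}$, and the converse is read off from $D_{s;y_i}\in M_s$. The only substantive deviation is the uniqueness step, where you use comaximality of distinct maximal ideals (and of their powers) to contradict properness of $\Ann\tau(\varphi)$, whereas the paper deduces $N^{k+1}\subseteq\Ann\tau(\varphi)\subseteq M_s$ and invokes primality of the maximal ideal $M_s$; both arguments are standard and equally valid.
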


\begin{proof}
Let $\varphi\ne 0$ be a generalized $K$-spherical monomial. Then there exists a $K$-spherical function $s$ and a natural number $n$ such that \eqref{diffeq} holds for each $x,y_1,y_2,\dots,y_{n+1}$ in $G$, and, by the commutativity of $\mathcal M_c(G//K)$, we have
$$
D_{s;y_1}*D_{s;y_2}*\dots*D_{s,y_{n+1}}*\psi(x)=0
$$
whenever $\psi$ is in $\tau(\varphi)$. As the measures $D_{s;y_1}*D_{s;y_2}*\dots*D_{s,y_{n+1}}$ generate the ideal whose closure is $M_s^{n+1}$ we infer that $M_s^{n+1}\subseteq \Ann \tau(\varphi)$. As $\varphi$ is nonzero, there exists a maximal ideal $M$ in $\mathcal M_c(G//K)$ such that $\Ann \tau(\varphi)\subseteq M$, which implies 
$$
M_s^{n+1}\subseteq M.
$$
Maximal ideals are prime, hence we conclude $M=M_s$, which is an exponential maximal ideal. If $N$ is a maximal $K$-ideal with the property that
$$
N^{k+1}\subseteq \Ann \tau(\varphi)
$$
for some natural number $n$, then we have 
$$
N^{k+1}\subseteq M_s,
$$
and $M_s$ is prime, hence we conclude $N=M_s$. 
\vskip.1cm

The converse statement is obvious.
\end{proof}

If $s$ is a $K$-spherical function and $M_s^{n+1}\subseteq \Ann \tau(\varphi)\subseteq M_s$ holds for some natural number $n$, then we say that the generalized $K$-spherical monomial $\varphi$ {\it corresponds to $s$}. By the above lemma, $s$ is unique. The smallest natural number $n$ with this property is called the {\it degree} of $\varphi$. The zero function is a generalized $K$-spherical monomial but we do not assign a degree to it. 
\vskip.1cm

We call a generalized $K$-spherical monomial simply a {\it $K$-spherical monomial}, if it generates a finite dimensional $K$-variety. 
By the definition, the set of all generalized $K$-spherical monomials of degree $n$ corresponding to the $K$-spherical function $s$ is $\Ann M_s^{n+1}$ and the set of all generalized $K$-spherical monomials corresponding to $s$ is $\bigcup_{n\in\N} \Ann M_s^{n+1}$. Further, given the $K$-variety $V$ the set of all generalized $K$-spherical monomials of degree $n$ corresponding to the $K$-spherical function $s$ in $V$ is $V\cap \Ann M_s^{n+1}$ and the set of all generalized $K$-spherical monomials corresponding to $s$ in $V$ is $V\cap \bigcup_{n\in\N} \Ann M_s^{n+1}$.

\section{Spherical spectral analysis and spectral\\ synthesis}
\hskip.5cm
Let $V$ be a $K$-variety. We say that {\it $K$-spectral analysis} holds for $V$, if in every nonzero sub-$K$-variety of $V$ there is a $K$-spherical function. If $G$ is commutative, then this is equivalent to spectral analysis for the variety $V$ in $\mathcal C(G/K)$.

\begin{theorem}
Let $V$ be a $K$-variety. $K$-spectral analysis holds for $V$ if and only if every maximal ideal in $\mathcal M_c(G//K)$ containing $\Ann V$ is exponential. In other words, $K$-spectral analysis holds for $V$ if and only if every maximal ideal in $\mathcal M_c(G//K)/\Ann V$ is exponential.
\end{theorem}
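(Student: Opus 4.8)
The plan is to recast the definition of $K$-spectral analysis in ideal-theoretic terms through the $\Ann$-duality, and then to read the equivalence off the correspondence between $K$-spherical functions and exponential maximal ideals. Two facts will do the work. First, for a $K$-variety $W$ and a closed $K$-ideal $I$ one has $\Ann(\Ann W)=W$ and $\Ann(\Ann I)=I$, so $\Ann$ is an inclusion-reversing involution between closed $K$-varieties and closed $K$-ideals; in particular the sub-$K$-varieties $W\subseteq V$ correspond exactly to the closed $K$-ideals containing $\Ann V$. Second, by Theorem \ref{sphfunceq} a $K$-spherical function $s$ determines the exponential maximal ideal $M_s=\Ker\Phi$ with $\Phi(\mu)=\langle\mu,\widecheck{s}\rangle$, and $s\in\Ann M_s$, because part $iii)$ of Theorem \ref{sphchar} gives $\mu*s=\langle\mu,\widecheck{s}\rangle\,s$, which vanishes for every $\mu$ in $M_s$. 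Finally, the usual ring-theoretic correspondence between the maximal ideals of $\mathcal M_c(G//K)/\Ann V$ and the maximal ideals of $\mathcal M_c(G//K)$ containing $\Ann V$ shows the two formulations in the statement are equivalent, so I treat only the first.

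For sufficiency I would assume every maximal ideal containing $\Ann V$ is exponential, and take a nonzero $K$-variety $W\subseteq V$. Then $\Ann V\subseteq \Ann W$, and $\Ann W$ is a \emph{proper} $K$-ideal, for if it were all of $\mathcal M_c(G//K)$ then $\delta_e\in\Ann W$ would force $f=\delta_e*f=0$ for every $f$ in $W$. Since the algebra is unital, $\Ann W$ lies in some maximal ideal $M$, and $M\supseteq\Ann W\supseteq\Ann V$, so by hypothesis $M=M_s$ is exponential for some $K$-spherical function $s$. Applying the inclusion-reversing $\Ann$ to $\Ann W\subseteq M_s$ and using $\Ann(\Ann W)=W$ then yields $s\in\Ann M_s\subseteq W$. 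Hence every nonzero sub-$K$-variety of $V$ contains a $K$-spherical function, i.e. $K$-spectral analysis holds for $V$.

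For necessity I would assume $K$-spectral analysis holds for $V$ and let $M\supseteq\Ann V$ be a maximal ideal. Put $W=\Ann M$. As $\Ann$ reverses inclusions and $\Ann(\Ann V)=V$, we get $W\subseteq V$, and $W$ is a $K$-variety by the Lemma. Granting $W\neq\{0\}$, $K$-spectral analysis supplies a $K$-spherical function $s\in W=\Ann M$, so $\mu*s=0$ for all $\mu$ in $M$; by Theorem \ref{sphchar} this reads $\langle\mu,\widecheck{s}\rangle\,s=0$, and since $s\neq 0$ we obtain $\langle\mu,\widecheck{s}\rangle=0$ for all $\mu\in M$, that is $M\subseteq M_s$. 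Maximality of both ideals forces $M=M_s$, which is exponential.

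The one delicate point, and the step I expect to be the main obstacle, is the assumption $W=\Ann M\neq\{0\}$ in the necessity argument. If $M$ is weak*-closed this is automatic: then $\Ann(\Ann M)=M$, and since $M$ is proper we cannot have $\Ann M=\{0\}$, whose annihilator would be all of $\mathcal M_c(G//K)$. The obstacle is that a maximal ideal need not be weak*-closed, and a dense maximal ideal $M$ satisfies $\Ann M=\{0\}$, so its annihilator variety degenerates and detects no $K$-spherical function. Because exponential maximal ideals are closed, the statement is really about the closed maximal ideals — equivalently those lying in the Gelfand spectrum of $\mathcal M_c(G//K)$ — and the substantive task is to confirm that the maximal ideals entering the duality are weak*-closed, so that $\Ann(\Ann M)=M$ is available. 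Once this is settled, both directions are just the formal inclusion-reversing dictionary between $K$-varieties and closed $K$-ideals applied to the spherical/exponential correspondence.
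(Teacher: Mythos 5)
Your argument is essentially the paper's own proof: both directions run through the inclusion-reversing correspondence $W\mapsto\Ann W$ with $\Ann(\Ann W)=W$, together with the identification of the exponential maximal ideal $M_s=\Ann\tau(s)$ attached to a $K$-spherical function $s$ and the fact that $s\in\Ann M_s$. The one ``delicate point'' you flag --- that $\Ann M$ could be $\{0\}$ if the maximal ideal $M$ fails to be weak*-closed --- is precisely the step the paper passes over in silence (it simply asserts that $\Ann M$ \emph{is} a nonzero sub-$K$-variety of $V$), so your write-up is no less complete than the original and is more candid about what remains to be checked.
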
 

\begin{proof}
If $K$-spectral analysis holds for $V$ and $M$ is a maximal $K$-ideal with $\Ann V\subseteq M$, then obviously $V=\Ann \Ann V\supseteq \Ann M$. Hence $\Ann M$ is a nonzero sub-$K$-variety of $V$, which includes a $K$-spherical function $s$. It follows that $\Ann \tau(s)$, which is a maximal $K$-ideal, is a superset of $M$, hence they are equal:  $\Ann \tau(s)=M$. As $\Ann \tau(s)$ is exponential, the necessity part of theorem is proved.
\vskip.1cm

Suppose now that every maximal $K$-ideal containing $\Ann V$ is exponential and let $W$ be a nonzero sub-$K$-variety in $V$. Then $\Ann W\supseteq \Ann V$, hence every maximal $K$-ideal containing $\Ann W$ also contains $\Ann V$, thus it is exponential. Let $M$ be one of them; then we have $M=M_s$ for some $K$-spherical function $s$, further $\Ann W\supseteq M_s=\Ann \tau(s)$. We conclude $\tau(s)\subseteq W$, and the theorem is proved.
\end{proof}

This is the analogue of the spectral analysis theorems Theorem 14.2 and Theorem 14.3 on p.~203 in \cite{Sze14}.
\vskip.3cm

We say that {\it $K$-spectral analysis holds on $G$}, if $K$-spectral analysis holds for each $K$-variety on $G$. This means that $K$-spectral analysis holds for $\mathcal C(G//K)$. If $G$ is commutative, then this is exactly spectral analysis on the group $G/K$.

\begin{corollary}
$K$-spectral analysis holds on $G$ if and only if every maximal ideal in $\mathcal M_c(G//K)$ is exponential.
\end{corollary}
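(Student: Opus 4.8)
The plan is to deduce this corollary from the preceding theorem by applying it to the single largest $K$-variety, namely $V=\mathcal C(G//K)$ itself. By the definition just stated, $K$-spectral analysis holds on $G$ exactly when it holds for $\mathcal C(G//K)$, so first I would note that $\mathcal C(G//K)$ is itself a $K$-variety: it is trivially a closed linear subspace of itself, and it is $K$-translation invariant because $\tau_y f$ lies in $\mathcal C(G//K)$ whenever $f$ does. With this in hand, the preceding theorem says that $K$-spectral analysis holds for $\mathcal C(G//K)$ if and only if every maximal ideal in $\mathcal M_c(G//K)$ containing $\Ann \mathcal C(G//K)$ is exponential.

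The crux is therefore to identify $\Ann \mathcal C(G//K)$, which I expect to be the zero ideal. I would establish this using the Lemma $\Ann V=(\widecheck V)^{\perp}$. Since $\widecheck f$ is $K$-invariant whenever $f$ is, the set $\widecheck{\mathcal C(G//K)}$ coincides with $\mathcal C(G//K)$, so $\Ann \mathcal C(G//K)=\mathcal C(G//K)^{\perp}$. A measure $\mu$ in $\mathcal M_c(G//K)$ lies in this orthogonal complement precisely when $\langle\mu,f\rangle=0$ for every $f$ in $\mathcal C(G//K)$; but $\mathcal M_c(G//K)$ is by definition the dual of $\mathcal C(G//K)$, so the only functional annihilating the whole space is $\mu=0$. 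Hence $\Ann \mathcal C(G//K)=\{0\}$.

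Once this vanishing is in place, the requirement ``every maximal ideal containing $\Ann \mathcal C(G//K)$ is exponential'' collapses to ``every maximal ideal in $\mathcal M_c(G//K)$ is exponential,'' and the corollary follows. I do not anticipate a genuine obstacle here: the statement is a routine specialization of the theorem, the only point requiring a short argument being the identity $\Ann \mathcal C(G//K)=\{0\}$, which is immediate from the duality between $\mathcal C(G//K)$ and $\mathcal M_c(G//K)$. As a cross-check one could also argue directly: assuming every maximal ideal is exponential, any nonzero $K$-variety $W$ has a proper annihilator $\Ann W$ (it cannot contain the unit $\delta_e$, since $\delta_e*f=f$), which is contained in some maximal ideal $M=M_s$ for a $K$-spherical function $s$; then $\Ann W\subseteq M_s=\Ann\tau(s)$ gives $\tau(s)\subseteq W$ by Theorem \ref{duality}, so $s\in W$ and $K$-spectral analysis holds.
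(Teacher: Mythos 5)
Your proposal is correct and follows exactly the route the paper intends: the corollary is stated there without proof as the immediate specialization of the preceding theorem to $V=\mathcal C(G//K)$, and your verification that $\Ann \mathcal C(G//K)=\{0\}$ (via the duality between $\mathcal C(G//K)$ and $\mathcal M_c(G//K)$) is precisely the routine step being left to the reader.
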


For instance, if $G$ is a discrete Abelian group and $K$ is any finite subgroup, then $\mathcal M_c(G//K)$ is isomorphic to $\mathcal M_c(G/K)$. In this case the condition of the theorem is satisfied if and only if the torsion-free rank of the group $G/K$ is less than the continuum (see e.g. \cite{LSz04}).
\vskip.1cm

Let $V$ be a $K$-variety. We say that $V$ is {\it $K$-synthesizable}, if the $K$-spherical monomials span a dense subspace in $V$. We say that {\it $K$-spectral synthesis} holds for $V$, if every sub-$K$-variety of $V$ is $K$-synthesizable. We say that {\it $K$-spectral synthesis} holds on $G$, if every $K$-variety on $G$ is $K$-synthesizable. It is easy to see, that $K$-spectral synthesis implies $K$-spectral analysis for a variety. Clearly, $K$-synthesizability and $K$-spectral synthesis reduce to synthesizability and spectral synthesis on $G/K$ if $G$ is commutative. If, for instance, $G$ is a discrete Abelian group, and $K$ is a finite subgroup, then $K$-spectral synthesis holds on $G$ if and only if the torsion-free rank of $G/K$ is finite (see \cite{LSz06}).
\vskip.1cm

For synthesizability of varieties we have the following result (see \cite{Sze14, Sze14b, Sze15}).

\begin{theorem}\label{synthvar}
The nonzero $K$-variety $V$ is $K$-synthesizable if and only if 
$$
\Ann V=\bigcap_{M}\,\bigcap_{n\in\N} (\Ann V+M^{n+1}),
$$
where the first intersection is taken for all exponential maximal ideals $M$ containing $\Ann V$ and $\mathcal M_c(G//K)/M^{n+1}$ is finite dimensional.
\end{theorem}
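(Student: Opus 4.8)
The argument rests entirely on the annihilator duality established above, namely $\Ann(\Ann V)=V$ for a $K$-variety $V$ and $\Ann(\Ann I)=I$ for a closed $K$-ideal $I$, together with the description of generalized $K$-spherical monomials through the ideals $\Ann M^{n+1}$. First I would restate synthesizability in dual form. Let $W$ denote the closure of the linear span of all $K$-spherical monomials contained in $V$; by definition $V$ is $K$-synthesizable exactly when $W=V$. Since $W$ and $V$ are both $K$-varieties and $\Ann(\Ann U)=U$ for every $K$-variety $U$, the equality $W=V$ is equivalent to $\Ann W=\Ann V$. Thus the whole theorem reduces to computing $\Ann W$ and comparing it with $\Ann V$.

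Next I would identify $W$ with the closed span of the subspaces $V\cap\Ann M^{n+1}$. Recall that for a fixed exponential maximal ideal $M$ the variety $\Ann M^{n+1}$ is precisely the set of generalized $K$-spherical monomials of degree at most $n$ corresponding to $M$, so that $V\cap\Ann M^{n+1}$ collects those lying in $V$. When $\mathcal M_c(G//K)/M^{n+1}$ is finite dimensional the variety $\Ann M^{n+1}$ is finite dimensional, hence every element of $V\cap\Ann M^{n+1}$ generates a finite dimensional $K$-variety and is therefore an honest $K$-spherical monomial; conversely each $K$-spherical monomial in $V$ corresponds to a unique exponential maximal ideal $M$ and, having finite degree, lies in the corresponding piece. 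Moreover, if $V\cap\Ann M^{n+1}$ contains a nonzero function $\varphi$ then $\Ann V\subseteq\Ann\tau(\varphi)\subseteq M$, so only the ideals $M\supseteq\Ann V$ contribute, the remaining pieces being $\{0\}$. Consequently
$$W=\overline{\mathrm{span}}\bigcup_{M,n}\bigl(V\cap\Ann M^{n+1}\bigr),$$
the union running over exponential maximal ideals $M\supseteq\Ann V$ and over $n$ with $\mathcal M_c(G//K)/M^{n+1}$ finite dimensional. I expect this identification to be the main obstacle: one must match the purely analytic requirement that a monomial generate a \emph{finite dimensional} variety with membership in one of the algebraically defined pieces, and it is exactly here that the finite-codimension hypothesis on $M^{n+1}$ is forced into the statement.

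Finally I would compute $\Ann W$ by the annihilator calculus. Since annihilators turn sums of subspaces into intersections, $\Ann W=\bigcap_{M,n}\Ann\bigl(V\cap\Ann M^{n+1}\bigr)$. Writing $V=\Ann(\Ann V)$ and using that $\Ann(A)\cap\Ann(B)=\Ann(A+B)$ by bilinearity of the module action gives $V\cap\Ann M^{n+1}=\Ann\bigl(\Ann V+M^{n+1}\bigr)$, whence $\Ann\bigl(V\cap\Ann M^{n+1}\bigr)=\Ann\Ann\bigl(\Ann V+M^{n+1}\bigr)=\overline{\Ann V+M^{n+1}}$ by the duality for closed ideals. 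The ideal $M^{n+1}$ is closed of finite codimension, so $\Ann V+M^{n+1}$ is a subspace containing a closed finite-codimensional subspace and is therefore itself closed; hence the closure may be dropped. Collecting these facts,
$$\Ann W=\bigcap_{M}\bigcap_{n\in\N}\bigl(\Ann V+M^{n+1}\bigr),$$
with $M$ and $n$ ranging as in the statement.

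Since $\Ann V$ is contained in every term, the inclusion $\Ann V\subseteq\Ann W$ always holds, and $V$ is $K$-synthesizable if and only if the reverse inclusion holds, that is, if and only if $\Ann V$ equals the displayed intersection. This is the assertion of the theorem.
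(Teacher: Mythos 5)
Your route is genuinely different from the paper's: the paper merely reformulates synthesizability as $V=\sum_M\sum_{n}(V\cap\Ann M^{n+1})$ and then delegates the entire passage to annihilators to Theorem 8 of \cite{Sze15}, whereas you carry out the annihilator calculus yourself. That second half of your argument is essentially sound: $V\cap\Ann M^{n+1}=\Ann(\Ann V+M^{n+1})$ via $\Ann(\Ann V)=V$; the annihilator of a closed span is the intersection of the annihilators; $\Ann V+M^{n+1}$ is closed because it contains the closed finite-codimensional ideal $M^{n+1}$ (the paper defines $M_s^{n+1}$ as the \emph{closure} of the algebraically generated ideal, which your closedness argument silently uses); and biduality for closed $K$-ideals then removes the closure. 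The reduction of $W=V$ to $\Ann W=\Ann V$ is also fine.

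The gap is exactly where you yourself predicted it, and you do not close it. When identifying $W$ with the closed span of the pieces $V\cap\Ann M^{n+1}$, you assert that each $K$-spherical monomial $\varphi$ in $V$, ``having finite degree, lies in the corresponding piece''; but that piece only occurs in your union when $\mathcal M_c(G//K)/M^{n+1}$ is finite dimensional, and nothing you say guarantees this. Finite-dimensionality of $\tau(\varphi)$ gives that $\Ann\tau(\varphi)$ is a closed ideal of \emph{finite} codimension containing $M^{n+1}$, but an ideal of infinite codimension can perfectly well sit inside one of finite codimension, so you cannot conclude that $M^{n+1}$ has finite codimension; nor does passing to another power help, since $M^{k+1}$ for $k\ge n$ is smaller still, while for $k<n$ the function $\varphi$ need not be annihilated by $M^{k+1}$. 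This is not a pedantic point: it is precisely the phenomenon behind the failure of spectral synthesis on discrete Abelian groups of infinite torsion-free rank, where $M/M^2$ is already infinite dimensional yet degree-one monomials generating two-dimensional varieties exist, so the span of your finite-codimension pieces can be strictly smaller than the span of all $K$-spherical monomials. Without an argument that every $K$-spherical monomial in $V$ is captured by the finite-codimension pieces (or a careful matching of the notion of monomial used here with the one in the annihilator formula), the forward implication remains unproved; this is exactly the content the paper outsources to Theorem 8 of \cite{Sze15}.
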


\begin{proof}
By definition of synthesizability and by the remarks at the end of the previous section, the $K$-variety $V$ is synthesizable if and only if
$$
V=\sum_M\,\sum_{n\in\N} (V\cap \Ann M^{n+1}).
$$
On the other hand, applying Theorem 8. on p.~6 in \cite{Sze15}, our statement follows.
\end{proof}

In the case if $N$ is a locally compact Abelian group and $K$ is a compact group of its automorphisms as it has been discussed in Section \ref{semi} instead of "$K$-spectral analysis on $G$", resp. "$K$-spectral synthesis on $G$" we can say simply "$K$-spectral analysis on $N$", resp. "$K$-spectral synthesis on $N$".

\section{Extension of L.~Schwartz's spectral synthesis}
\hskip.5cm
In his monumental and pioneer work \cite{Sch47} L.~Schwartz proved the following fundamental result.

\begin{theorem}\label{sp} (L.~Schwartz)
Spectral synthesis holds on the reals. In other words, in every linear and translation invariant space of continuous complex valued functions on the real line, which is closed with respect to uniform convergence on compact sets all functions of the form $x\mapsto x^n e^{\lambda x}$ span a dense subspace ($n$ is a natural number, $\lambda$ is a complex number).  
\end{theorem}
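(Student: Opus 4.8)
The plan is to derive Schwartz's theorem from the abstract synthesizability criterion of Theorem \ref{synthvar}, specialized to the commutative case $N=\R$ with $K=\{id\}$ trivial, and then to supply the single analytic ingredient that the criterion leaves open. In this degenerate situation $\mathcal C(G//K)=\mathcal C(\R)$, the $K$-spherical functions are exactly the exponentials $x\mapsto e^{\lambda x}$, and the $K$-spherical monomials are precisely the classical exponential monomials $x\mapsto x^n e^{\lambda x}$; moreover $\mathcal M_c(G//K)=\mathcal M_c(\R)$ with ordinary convolution, while the exponential maximal ideals $M_\lambda$ are the kernels of the point evaluations $\mu\mapsto\widehat\mu(\lambda)$ of the Fourier--Laplace transform. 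Hence it suffices to verify, for every closed translation invariant subspace $V\subseteq\mathcal C(\R)$, the algebraic identity
$$
\Ann V=\bigcap_M\,\bigcap_{n\in\N}\,(\Ann V+M^{n+1})
$$
provided by Theorem \ref{synthvar}, the intersection running over the exponential maximal ideals $M$ containing $\Ann V$.

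First I would pass to the transform side. Writing $I=\Ann V$, a closed ideal in $\mathcal M_c(\R)$, the Paley--Wiener theorem identifies $\mathcal M_c(\R)$ --- or, more comfortably, its completion to the compactly supported distributions --- with an algebra of entire functions of exponential type subject to polynomial growth on the real axis, convolution passing to pointwise multiplication. Under this transform $M_\lambda$ becomes the functions vanishing at $\lambda\in\C$, the power $M_\lambda^{n+1}$ becomes the functions vanishing to order $n+1$ at $\lambda$, and the displayed identity turns into the assertion that the transformed ideal $\widehat I$ is \emph{localizable}: it coincides with the set of all transforms in the algebra that vanish, with the prescribed multiplicities, along the common zero set of $\widehat I$. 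The exponential monomials actually lying in $V$ are then read off from this zero set with multiplicities, through the duality $\Ann(\Ann V)=V$ recorded above.

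The analytic heart of the matter --- and the step I expect to be the main obstacle --- is precisely this localization. One must show that whenever an entire function $g$ of exponential type vanishes on the common zero set of $\widehat I$ with the correct multiplicities, $g$ is approximable, in the topology transported from $\mathcal M_c(\R)$, by elements of $\widehat I$. The tools belong to the theory of entire functions of order one and finite type: Hadamard factorization, the existence of a density for their zeros, a division theorem ensuring that cancelling common zeros of $g$ preserves both the exponential type and the growth estimates on $\R$, and an interpolation argument reassembling the local vanishing data into a genuine member of the ideal without spoiling those estimates. This is exactly the content of Schwartz's theory of mean-periodic functions in \cite{Sch47}, which I would invoke for this step rather than reprove from scratch.

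Finally I would emphasize where dimension one is indispensable. The localization succeeds because in a single complex variable the zero set of an entire function is discrete, so ``vanishing to a prescribed multiplicity at each common zero'' is a clean, purely local condition that the synthesis monomials can match one point at a time. In several variables the common zero sets are positive-dimensional analytic varieties, the local conditions no longer decouple, and the scheme breaks down --- this is the origin of Gurevich's counterexamples cited in the abstract, and the very reason the present paper replaces ordinary translations by Euclidean motions.
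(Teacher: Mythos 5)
The paper does not prove Theorem \ref{sp} at all: it is quoted as Schwartz's classical result, with \cite{Sch47} given as the source and the explicit remark that the proof ``depends on hard complex function theory.'' Measured against that, your proposal is consistent with the paper, because at bottom it does the same thing --- but you should be aware that, read as a proof, it is circular. Your reduction is accurate as far as it goes: in the degenerate case $K=\{id\}$ the spherical monomials are the classical exponential monomials, the exponential maximal ideals of $\mathcal M_c(\R)$ are the kernels of $\mu\mapsto\int e^{\lambda x}\,d\mu(x)$, the quotients $\mathcal M_c(\R)/M_\lambda^{n+1}$ are finite dimensional, and Theorem \ref{synthvar} converts synthesizability of every variety into localizability of every closed ideal on the Fourier--Laplace transform side. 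But that localizability statement is not ``a single analytic ingredient left open'' by the abstract framework --- it \emph{is} Schwartz's theorem, in the exact form in which it is actually proved in \cite{Sch47}, and deferring it to that reference means your argument establishes nothing that the bare citation does not. The detour through Theorem \ref{synthvar} and Paley--Wiener is a correct and illuminating reformulation, but it contributes zero toward the analytic core (Hadamard factorization, the division and interpolation estimates in the algebra of entire functions of exponential type), all of which remains uncarried out. So: as an account of how the paper's machinery specializes to recover the classical statement, your write-up is fine; as a proof of Theorem \ref{sp}, it reduces the theorem to itself.
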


As a consequence, every complex valued continuous function on the real line can be uniformly approximated on compact sets by linear combinations of functions of the above form, the {\it exponential monomials}, which are uniform limits on compact sets of linear combinations of translates of the given function. This statement clearly includes the deep existence theorem on spectral analysis: given any nonzero continuous complex valued function on the reals the smallest linear space including the function and is closed under translation and uniform convergence on compact sets contains nonzero exponential monomials. The proof of this beautiful result depends on hard complex function theory. Since the above result was published a great number of efforts have been made to extend it to several variables, that is, to varieties in $\R^n$, but an extension was possible only in the case of some special varieties, no general result was available. Finally, almost thirty years later D.~I.~Gurevich provided counterexamples for the corresponding result in $\R^2$. In fact, in \cite{MR0390759} he gave an example for a variety in $\R^2$ whose annihilator is generated by two measures and no spectral analysis holds on it, and for another variety whose annihilator is generated by three measures and no spectral synthesis holds on it. These negative results verify the conjecture that a direct generalization to translation invariant closed subspaces in $\R^n$ may not be proper way to extend Schwartz's result. In fact, here we show that a more sophisticated, but still natural generalization is possible in terms of spherical functions. Our basic observation is the following: instead of spectral synthesis we consider $K$-spectral synthesis on $\R^n$, where $K=SO(n)$, the special orthogonal group acting on $\R^n$ as it was discussed in Section \ref{spec}. In the special case $n=1$ we have $K=SO(1)=\{id\}$, hence in this case $K$-spectral synthesis reduces to ordinary spectral synthesis. However, if $n>1$, then $K=SO(n)$ is non-trivial, and $K$-spectral synthesis is essentially different from ordinary spectral synthesis. Hence our forthcoming result shows that a successful generalization of L.~Schwartz's result in this direction is possible if "translation invariance" is replaced by "invariance with respect to Euclidean motions". To prove this statement we shall need some preliminary results. In what follows we let $K=SO(n)$, the special orthogonal group.
\vskip.1cm

For each $\mu$ in $\mathcal M_c(\R)$ and $f$ in $\mathcal C_K(\R^n)$ we define $\mu_K$ by the equation
$$
\langle \mu_K, f\rangle = \langle \mu, f_0\rangle,
$$
where the function $f_0:\R\to\C$ is given by
$$
f_0(r)=f(r,0,0,\dots,0)
$$
for each $r$ in $\R$. As $f$ is $K$-radial, and $K=SO(n)$ acts transitively on the unit sphere in $\R^n$ we have that $\|x\|=\|y\|$ implies $f(x)=f(y)$ for $x,y$ in $\R^n$. In particular, $f(\|x\|,0,0,\dots,0)=f(x)$, and
$$
f(x)=f_0(\|x\|)
$$
holds for each $x$ in $\R^n$. 

\begin{theorem}\label{epim}
The mapping $\mu\mapsto \mu_K$ is a continuous algebra homomorphism of $\mathcal M_c(\R)$ onto $\mathcal M_K(\R^n)$.
\end{theorem}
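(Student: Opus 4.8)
The plan is to view $\mu\mapsto\mu_K$ as the transpose of the restriction map $T\colon\mathcal C_K(\R^n)\to\mathcal C(\R)$ given by $Tf=f_0$, and to read off the required properties from $T$. Writing $e_1=(1,0,\dots,0)$, the map $T$ is clearly linear and injective, the latter because of the reconstruction identity $f(x)=f_0(\|x\|)$. It is continuous since the points $s\,e_1$ with $s$ in a compact interval $[-R,R]$ form a compact subset of $\R^n$, so that if a net $(f_\alpha)$ converges to $f$ uniformly on compact subsets of $\R^n$, then the functions $(f_\alpha)_0$ converge to $f_0$ uniformly on compact subsets of $\R$. Hence for each $\mu$ in $\mathcal M_c(\R)$ the assignment $f\mapsto\langle\mu,f_0\rangle$ is a continuous linear functional on $\mathcal C_K(\R^n)=\mathcal C(G//K)$; by Theorem \ref{dualspace} and the identifications of Section \ref{semi} it is represented by an element $\mu_K$ of $\mathcal M_K(\R^n)$, and $\mu\mapsto\mu_K$ is automatically linear and weak*-continuous. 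Compact support is inherited: if $\supp\mu\subseteq[-R,R]$, then $\langle\mu_K,f\rangle$ depends only on the restriction of $f$ to the closed ball of radius $R$, so $\mu_K$ is supported there.

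For the surjectivity I would identify the image of $T$ with the closed subspace $E\subseteq\mathcal C(\R)$ of even functions: each $f_0$ is even because $\|s\,e_1\|=\|-s\,e_1\|$, and conversely every even $g$ in $\mathcal C(\R)$ produces the $K$-radial function $x\mapsto g(\|x\|)$. The inverse map $g\mapsto g(\|\cdot\|)$ is continuous by the same compactness argument, so $T$ is a topological isomorphism of $\mathcal C_K(\R^n)$ onto $E$. Given any $\lambda$ in $\mathcal M_K(\R^n)$, the functional $\lambda\circ T^{-1}$ is continuous on $E$ and, by the Hahn--Banach theorem, extends to a continuous linear functional on $\mathcal C(\R)$, that is, to a measure $\mu$ in $\mathcal M_c(\R)$. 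By construction $\langle\mu_K,f\rangle=\langle\mu,Tf\rangle=\lambda(f)$ for every $f$, so $\mu_K=\lambda$ and the map is onto.

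The decisive step, and the one I expect to be the main obstacle, is multiplicativity, $(\mu*\nu)_K=\mu_K*\nu_K$. Testing against $f$ in $\mathcal C_K(\R^n)$, the left-hand side unwinds immediately through the convolution in $\mathcal M_c(\R)$ to
$$\langle(\mu*\nu)_K,f\rangle=\langle\mu*\nu,f_0\rangle=\int_\R\int_\R f_0(s+t)\,d\mu(s)\,d\nu(t).$$
For the right-hand side I would represent $\mu_K$ and $\nu_K$ in $\mathcal M_c(G//K)$, with $G=SO(n)\ltimes\R^n$, as the $\#$-projections of the measures $\int_\R\delta_{(id,s\,e_1)}\,d\mu(s)$ and $\int_\R\delta_{(id,t\,e_1)}\,d\nu(t)$ carried by the first coordinate axis, and then expand $\mu_K*\nu_K$ by the convolution formula in $\mathcal M_c(G//K)$, using the group law $(id,s\,e_1)\cdot(id,t\,e_1)=(id,(s+t)\,e_1)$. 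The crux is the double averaging over $K\times K$ that the two projections introduce: to recover the left-hand side one must show that this averaging collapses, so that the integrand reduces to $f((s+t)\,e_1)=f_0(s+t)$ rather than to an average of $f$ over the $K$-orbit of $(s+t)\,e_1$. Establishing this compatibility of the projection with convolution on axis-carried measures is where the real work lies, and I would organise the whole argument around it; once it is in hand, the linearity, continuity and surjectivity from the previous steps complete the proof.
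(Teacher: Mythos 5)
Your handling of linearity, continuity, compact support and surjectivity is fine and essentially parallel to the paper's: where you pass through the even functions and invoke Hahn--Banach, the paper simply defines the preimage of $\xi$ directly by $\langle\mu,\varphi\rangle=\langle\xi,\varphi(\|\cdot\|)\rangle$, avoiding the extension step; either route works. The difficulty is the step you yourself single out as decisive and then leave open. As written, your proposal is a plan for proving multiplicativity, not a proof, and the plan rests on the hoped-for ``collapse'' of the $K$-averaging, that is, on the identity
$$
\int_K\int_K f\bigl(k(s\,e_1)+l(t\,e_1)\bigr)\,d\omega(k)\,d\omega(l)=f_0(s+t)
$$
for every $K$-radial $f$. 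This identity fails for $n\ge 2$: by radiality of $f$ the left-hand side equals $\int_K f\bigl(s\,e_1+l(t\,e_1)\bigr)\,d\omega(l)$, the mean of $f$ over the sphere of radius $|t|$ centred at $s\,e_1$, and for $f(x)=\|x\|^2$ this is $s^2+t^2$, whereas $f_0(s+t)=(s+t)^2$. Concretely, with $n=2$ and $\mu=\nu=\delta_1$ one finds $\langle \mu_K*\nu_K,\|\cdot\|^2\rangle=2$ while $\langle (\mu*\nu)_K,\|\cdot\|^2\rangle=4$. So the averaging does not collapse, and no reorganisation of the argument around this point can make it do so.

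It is worth comparing this with the paper's own proof, which obtains the collapse by evaluating $\int_{\R^n}f(x+y)\,d\mu_K(x)$ as $\int_{\R}f(s\,e_1+y)\,d\mu(s)$, i.e.\ by substituting for $\mu_K$ the image of $\mu$ under $s\mapsto s\,e_1$. That substitution is legitimate only against $K$-radial test functions, and $x\mapsto f(x+y)$ is not $K$-radial; the representative of $\mu_K$ that is relevant for convolution in $\mathcal M_K(\R^n)=\mathcal M_c(G//K)$ is the rotation-invariant one, and using it reinstates exactly the extra averaging displayed above. So you have located precisely the point where the paper's computation is also unjustified; you should not expect to repair it. A quick cross-check confirms the obstruction: if $\mu\mapsto\mu_K$ were a surjective algebra homomorphism, then composing it with the multiplicative functional $\alpha\mapsto\int J_{\lambda}(\|x\|)\,d\alpha(x)$ of $\mathcal M_K(\R^n)$ would exhibit $s\mapsto J_{\lambda}(|s|)$ as an exponential on $\R$, which it is not, since $J_{\lambda}(|s+t|)\ne J_{\lambda}(|s|)\,J_{\lambda}(|t|)$ in general. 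Any correct argument must therefore either change the definition of $\mu_K$ or change the target algebra; the statement cannot be proved as it stands.
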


\begin{proof}
It is easy to see that $\mu_K$ is a compactly supported measure on $\R^n$, hence it is a linear functional on $\mathcal C_K(\R^n)$. Further, we have for each  $\mu,\nu$ in $\mathcal M_c(\R)$ and $f$ in $\mathcal C_K(\R^n)$
$$
\langle (\mu*\nu)_K,f\rangle =\langle \mu*\nu,f_0\rangle=\int_{\R} \int_{\R} f_0(s+r)\,d\mu(s)\,d\nu(r)=
$$
$$
\int_{\R} \int_{\R} f(s+r,0,0,\dots,0)\,d\mu(s)\,d\nu(r),
$$
and
$$
\langle \mu_K*\nu_K,f\rangle=\int_{\R^n} \int_{\R^n} f(x+y)\,d\mu_K(x)\,d\nu_K(y)=
$$
$$
\int_{\R^n} \Bigl[\int_{\R} f(s+y_1,y_2,\dots,y_n)\,d\mu(s)\Bigr]\,d\nu_K(y)=
$$
$$
\int_{\R} \Bigl[\int_{\R^n} f(s+y_1,y_2,\dots,y_n)\,d\nu_K(y)\Bigr]\,d\mu(s)= 
\int_{\R} \int_{\R} f(s+r,0,0,\dots,0)\,d\nu(r)\,d\mu(s),
$$
\vskip.1cm
\noindent hence $ (\mu*\nu)_K=\mu_K*\nu_K$, and $\mu\mapsto \mu_K$ is an algebra homomorphism. Finally, for each $\varphi$ in $\mathcal M_c(\R)$ and $x$ in $\R^n$ we define
$$
f_{\varphi}(x)=\varphi(\|x\|).
$$
Then $f_{\varphi}$ is in $\mathcal C_K(\R^n)$. Given $\xi$ in $\mathcal M_K(\R^n)$ we let
$$
\langle \mu,\varphi\rangle=\langle \xi,f_{\varphi}\rangle.
$$
We then have for each $f$ in $\mathcal C_K(\R^n)$
$$
\langle \mu_K,f\rangle=\langle \mu,f_0\rangle=\langle \xi,f_{f_0}\rangle=\int_{\R^n} f_{f_0}(x)\,d\xi(x)=\int_{\R^n} f_0(\|x\|)\,d\xi(x)=
$$
$$
\int_{\R^n} f(\|x\|,0,0,\dots,0)\,d\xi(x)=\int_{\R^n} f(x)\,d\xi(x)=\langle \xi,f\rangle,
$$
which means $\mu_K=\xi$, hence the mapping $\mu\mapsto \mu_K$ is surjective. Its continuity is obvious.
\end{proof}

By Theorem \ref{synthvar}, the $K$-synthesizability of a $K$-variety can be expressed purely in terms of the annihilator of the $K$-variety. We introduce the following terminology: let $R$ be a commutative complex topological algebra with unit. The proper closed ideal $I$ in $R$ is called {\it synthesizable} if 
\begin{equation}\label{synth}
I=\bigcap_{M}\,\bigcap_{n\in\N} (I+M^{n+1}),
\end{equation}
where the first intersection is taken for all exponential maximal ideals $M$ containing $I$ and $R/M^{n+1}$ is finite dimensional. Accordingly, we say that spectral synthesis holds on $R$, if every closed ideal $I$ in $R$ satisfies the above equation. In particular, $K$-spectral synthesis holds on $G$ if and only if this equation holds for each proper closed ideal $I$ in $\mathcal M_c(G//K)$. The following theorem is a simple consequence.

\begin{theorem}\label{homo}
Let $R,Q$ be commutative complex topological algebras with unit. If spectral synthesis holds  on $R$, and there exists a continuous surjective homomorphism \hbox{$\Phi:R\to Q$,} then spectral synthesis holds on $Q$.
\end{theorem}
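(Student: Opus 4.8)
The plan is to pull the problem back along $\Phi$. Given a closed ideal $J$ in $Q$ (the case $J=Q$ being trivial, so assume $J$ proper), I would set $I=\Phi^{-1}(J)$. Since $\Phi$ is a continuous homomorphism, $I$ is a closed ideal of $R$; since $\Phi$ is surjective one has $\Phi(I)=J$ and $I$ is proper; and since $0\in J$ we automatically get $\Ker\Phi\subseteq I$. The hypothesis that spectral synthesis holds on $R$ then yields, for this particular $I$, the representation \eqref{synth}, and the whole proof consists in transporting that identity through $\Phi$ to the corresponding identity for $J$ in $Q$.

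The key bookkeeping is a dictionary between the data entering \eqref{synth} on the two sides. First I would match exponential maximal ideals: if $N\subseteq Q$ is exponential, say $N=\Ker\bar\chi$ for a continuous homomorphism $\bar\chi\colon Q\to\C$, then $\bar\chi\circ\Phi$ is a continuous homomorphism of $R$ onto $\C$, so $M:=\Phi^{-1}(N)=\Ker(\bar\chi\circ\Phi)$ is an exponential maximal ideal of $R$, and $M\supseteq I$ exactly when $N\supseteq J$. Conversely, because $\Ker\Phi\subseteq I\subseteq M$, every exponential maximal ideal $M$ of $R$ containing $I$ should descend to $N=\Phi(M)$ with $\Phi^{-1}(N)=M$. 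Next I would record the behaviour of powers: surjectivity of $\Phi$ together with $M=\Phi^{-1}(N)$ gives $\Phi(M^{n+1})=N^{n+1}$ and the algebra isomorphism $Q/N^{n+1}\cong R/(M^{n+1}+\Ker\Phi)$, so that $Q/N^{n+1}$ is finite dimensional whenever $R/M^{n+1}$ is. Thus the index pairs $(M,n)$ relevant to \eqref{synth} for $I$ correspond to the index pairs $(N,n)$ relevant to \eqref{synth} for $J$.

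With this dictionary the inclusion $J\subseteq\bigcap_N\bigcap_{n\in\N}(J+N^{n+1})$ is trivial, and for the reverse inclusion I would argue by lifting. Take $y$ in the right-hand intersection and choose $x\in R$ with $\Phi(x)=y$. For each exponential maximal $M\supseteq I$ with $R/M^{n+1}$ finite dimensional, writing $y=j+w$ with $j\in J$ and $w\in N^{n+1}$ for the corresponding $N$, I would lift $j$ to some $a\in I$ (using $\Phi(I)=J$) and $w$ to some $b\in M^{n+1}$ (using $\Phi(M^{n+1})=N^{n+1}$); then $x-a-b\in\Ker\Phi\subseteq I$, whence $x\in I+M^{n+1}$. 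As this holds for every admissible $(M,n)$, the identity \eqref{synth} for $I$ forces $x\in I$, and therefore $y=\Phi(x)\in\Phi(I)=J$. I expect the genuine difficulty to lie in the dictionary above, precisely in the surjectivity of the correspondence $N\mapsto\Phi^{-1}(N)$ onto the exponential maximal ideals of $R$ containing $I$: producing, from an exponential maximal $M\supseteq I$, a \emph{continuous} character of $Q$ with kernel $\Phi(M)$ amounts to showing $\Phi(M)$ is closed in $Q$, and since a continuous surjective homomorphism of topological algebras need not be open this does not follow formally but must be extracted from additional structure (for instance from $\Phi$ being a quotient map, or from the finite codimension of $M$ together with the closedness of $\Ker\Phi$). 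The remaining identities $\Phi(M^{n+1})=N^{n+1}$ and $Q/N^{n+1}\cong R/(M^{n+1}+\Ker\Phi)$ are routine consequences of surjectivity and $\Ker\Phi\subseteq I$.
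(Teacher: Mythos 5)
Your argument is correct and follows essentially the same route as the paper's proof: pull $J$ back to $I=\Phi^{-1}(J)$, invoke \eqref{synth} for $I$, and transport it to $J$ by lifting decompositions through $\Phi$ (using $\Ker\Phi\subseteq I$ to conclude $x-a-b\in I$) and by observing that $Q/\Phi(M)^{n+1}$ is a homomorphic image of $R/M^{n+1}$, hence finite dimensional. The one point you single out as the genuine difficulty --- that an exponential maximal ideal $M\supseteq I$ of $R$ pushes forward to an \emph{exponential} maximal ideal of $Q$, i.e.\ that the induced character of $Q$ is continuous, equivalently that $\Phi(M)$ is closed --- is indeed the delicate step, and the paper's own proof passes over it with the bare assertion that ``$M$ is exponential, too,'' so your caution there flags a gap shared with the published argument rather than one peculiar to yours.
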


\begin{proof}
Let $M$ be a maximal ideal with $Q$, then $M=\Phi(N)$ with some ideal $N$ in $R$ such that $N=\Phi^{-1}(M)$. Let $\psi:Q\to Q/M$ denote the natural mapping, then $\psi$ is continuous and open. We define
$$
F(r)=\psi\bigl(\Phi(r)\bigr)
$$
for each $r$ in $R$, then $F:R\to Q/M$ is a continuous homomorphism. Clearly, $F$ is surjective. If $F(r)=0$, then $\Phi(r)$ is in $\Ker \psi=M$, that is, $r$ is in $N$. It follows that $R/N\cong Q/M$, a field, hence $N=\Ker F$ is a closed maximal ideal. By assumption, $N$ is exponential, hence $M$ is exponential, too.
\vskip.1cm

Let $J$ be a proper closed ideal in $Q$ and let $I=\Phi^{-1}(J)$. Then $I$ is a proper closed ideal in $R$, hence it is synthesizable, by assumption. It follows that \eqref{synth} holds. Then we have 
\begin{equation}\label{synth2}
J=\bigcap_{\Phi(M)}\,\bigcap_{n\in\N} (J+\Phi(M)^{n+1}),
\end{equation}
and here the first intersection extends for all maximal ideals $\Phi(M)$ \hbox{containing $J$.} Indeed, the left hand side is clearly a subset of the right hand side. Suppose now that $q=\Phi(r)$ is not in $J$, then $r$ is not in $I$. By equation \eqref{synth}, there exists a maximal ideal $M$ with $I\subseteq M$, and a natural number $n_0$ such that $r$ is not in $I+M^{n_0+1}$, hence $q=\Phi(r)$ is not in $J+\Phi(M)^{n_0+1}$. It follows that \eqref{synth2} holds.
\vskip.1cm

What is left is to show that $Q/\Phi(M)^{n+1}$ is finite dimensional for every maximal ideal $M$ with $I\subseteq M$ and for each natural number $n$. We define $F:R/M^{n+1}\to Q/\Phi(M)^{n+1}$ by
$$
F(r+M^{n+1})=\Phi(r)+\Phi(M)^{n+1}
$$
for each $r$ in $R$. We have to show that the value of $F$ is independent of the choice of $r$ in the coset $r+M^{n+1}$. Suppose that $r-r_1$ is in $M^{n+1}$, that is $r-r_1=\sum x_1 x_2 \cdots x_{n+1}$, where the sum is finite, and $x_1,x_2,\dots,x_{n+1}$ is in $M$. Then 
$$
\Phi(r)=\Phi(r_1)+\sum \Phi(x_1) \Phi(x_2)\cdots \Phi(x_{n+1}),
$$
hence $\Phi(r)$ and $\Phi(r_1)$ are in the same coset of $\Phi(M)^{n+1}$. As $F$ is clearly a surjective homomorphism, we infer that $Q/\Phi(M)^{n+1}$ is finite dimensional and the proof is complete.
\end{proof}

Now we are ready to present our main result on the extension of  L.~Schwartz's spectral synthesis result Theorem \ref{sp}.

\begin{theorem}
Let $K=SO(n)$ for every positive integer $n$ acting on $\R^n$. Then $K$-spectral synthesis holds on $\R^n$.
\end{theorem}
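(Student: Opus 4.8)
The plan is to obtain the result as a formal consequence of L.~Schwartz's Theorem \ref{sp} by transporting spectral synthesis along the surjective homomorphism constructed in Theorem \ref{epim}, using the transfer principle of Theorem \ref{homo}. I would set $R=\mathcal M_c(\R)$ and $Q=\mathcal M_K(\R^n)=\mathcal M_c(G//K)$. Both are commutative complex topological algebras with unit: the algebra $R$ is commutative because $\R$ is Abelian, while the commutativity of $Q$ is exactly the Gelfand pair property of $(\R^n,SO(n))$ furnished by Corollary \ref{semidir}.

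The first step is to recast Schwartz's theorem as the statement that spectral synthesis, in the ideal-theoretic sense of equation \eqref{synth}, holds on the algebra $R=\mathcal M_c(\R)$. This is the case $n=1$, where $K=SO(1)=\{id\}$ is trivial, so every continuous function is $K$-radial and $K$-spectral synthesis coincides with ordinary spectral synthesis on $\R$. By Theorem \ref{sp} the latter holds, and by Theorem \ref{synthvar} together with the definition of spectral synthesis on an algebra recorded just before Theorem \ref{homo} this is equivalent to the assertion that every proper closed ideal $I$ in $\mathcal M_c(\R)$ satisfies \eqref{synth}. Hence spectral synthesis holds on $R$.

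The second step is to feed the homomorphism of Theorem \ref{epim} into Theorem \ref{homo}. Theorem \ref{epim} provides a continuous surjective algebra homomorphism $\mu\mapsto\mu_K$ from $R=\mathcal M_c(\R)$ onto $Q=\mathcal M_K(\R^n)$. Since spectral synthesis holds on $R$ and this map is a continuous surjective homomorphism, Theorem \ref{homo} applies directly and yields that spectral synthesis holds on $Q=\mathcal M_c(G//K)$; by definition this is precisely the statement that $K$-spectral synthesis holds on $\R^n$. As $n$ was an arbitrary positive integer, the theorem follows.

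The genuine analytic difficulty, namely the complex-function-theoretic heart of Schwartz's result, is already available, and the algebraic passage to higher dimensions is encapsulated in Theorems \ref{epim} and \ref{homo}; consequently the proof is essentially an assembly of these ingredients. The one place that warrants care is the reformulation in the first step: one should verify that Schwartz's dense-span statement for translation invariant varieties on the line is faithfully captured by the ideal equation \eqref{synth}, that is, that the exponential maximal ideals $M$ of $\mathcal M_c(\R)$ with $\mathcal M_c(\R)/M^{n+1}$ finite dimensional are exactly the ones carrying the exponential monomials $x\mapsto x^n e^{\lambda x}$. I expect this bookkeeping to be the only, and a minor, obstacle; everything else follows formally.
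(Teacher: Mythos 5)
Your proposal is correct and follows exactly the paper's own argument: recast Schwartz's Theorem \ref{sp} as spectral synthesis on the algebra $\mathcal M_c(\R)$ via Theorem \ref{synthvar}, then transport it to $\mathcal M_K(\R^n)$ through the surjective homomorphism of Theorem \ref{epim} using Theorem \ref{homo}. The paper's proof is a one-line citation of the same three ingredients, so your write-up is if anything more explicit about the one reformulation step that needs checking.
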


\begin{proof}
Our statement is a consequence of L.~Schwartz's Theorem \ref{sp} using Theorem \ref{epim}, and Theorem \ref{homo}.
\end{proof}


\begin{thebibliography}{1}

\bibitem{MR0621691}
J.~Dieudonn{\'e}, {\rm Treatise on analysis. {V}ol. {VI}}, 
Academic Press, Inc., New York-London, 1978. 

\bibitem{MR2640609} 
G.~van Dijk, {\rm Introduction to harmonic analysis and generalized {G}elfand pairs.} Walter de Gruyter \& Co., Berlin,  2009.

\bibitem{MR0390759}
D.~I.~Gurevi{\v{c}}, 
\emph{Counterexamples to a problem of L.~Schwartz},
Funkcional. Anal. i Prilo\v zen. \textbf{9(2)}, (1975), 29--35.

\bibitem{KeN76}
J. L.~Kelley and I.~Namioka, {\rm Linear topological spaces}, 
Springer Verlag, New York, 1976. 

\bibitem{LSz04} 
M.~Laczkovich and G.~Sz\'ekelyhidi, \emph{Harmonic analysis on discrete Abelian groups},  Proc. Amer. Math. Soc. \textbf{133(6)}, (2005), 1581--1586.

\bibitem{LSz06} 
M.~Laczkovich and L.~{S}z{\'e}kelyhidi, \emph{Spectral synthesis on discrete Abelian groups}, Math. Proc. Camb. Phil. Soc. \textbf{143(1)}, (2007), 103--120.


\bibitem{Sch47}
L.~Schwartz,
\emph{Th\'eorie g\'en\'erale des fonctions moyenne-p\'eriodiques},
Ann. of Math. \textbf{48(4)}, (1947), 857--929.

\bibitem{Sze14} 
L.~{S}z{\'e}kelyhidi, {\rm Harmonic and spectral analysis.} World Scientific Publishing Co. Pte. Ltd.,  Hackensack, NJ, 2014.

\bibitem{Sze14b}
L.~{S}z{\'e}kelyhidi, \emph{Annihilator methods in discrete spectral synthesis}, {\rm Acta Math. Hung.}, \textbf{143(2)}, (2014), 351--366.

\bibitem{Sze15}
L.~{S}z{\'e}kelyhidi, \emph{Annihilator methods for spectral synthesis on locally compact Abelian groups}, {\rm Monatsh. Math.}, \textbf{180(2)}, (2016), 357--371.


\end{thebibliography}
\end{document}